\newtheorem{definition}{Definition}[section]
\newtheorem{question}[definition]{Question}
\newtheorem{remark}[definition]{Remark}
\newtheorem{theorem}{Theorem}[section]
\newtheorem{claim}[definition]{Claim}
\newtheorem{proposition}[definition]{Proposition}
\newtheorem{lemma}[definition]{Lemma}
\newtheorem{corollary}[definition]{Corollary}
\newtheorem{Lemma}[definition]{Lemma}
\newcommand{\po}{\mathbb{P}}
\newcommand{\qo}{\mathbb{Q}}
\newcommand{\la}{\langle}
\newcommand{\ra}{\rangle}
\newcommand{\name}{\dot}
\newcommand{\uhr}{\upharpoonright}
\newcommand{\vc}{\mathcal{V}}
\newcommand{\wc}{\mathcal{W}}
\DeclareMathOperator{\cf}{cof}
\begin{document}

\title{A note on the Ketonen order and Lipschitz reducibility between ultrafilters}
\author{Eyal Kaplan\\UC Berkeley\\Berkeley, CA}

\date{\today}
\maketitle

\begin{abstract}
In his study of the Ultrapower Axiom (UA), Goldberg revealed a connection between UA and the determinacy of certain games that witness Lipschitz reducibility between ultrafilters. In particular, he analyzed the relationship between the Ketonen and Lipschitz orders—two natural extensions of the Mitchell order from normal measures to arbitrary $\sigma$-complete ultrafilters—and proved that the Lipschitz order extends the Ketonen order. He further observed that under UA the two orders coincide. Goldberg asked if it's consistent that the orders differ from each other. We show that the answer is positive. In fact, even the Weak Ultrapower Axiom does not imply that the Ketonen and Lipschitz orders coincide.
\end{abstract}

\section{Introduction}

The study and analysis of the various connections between $\sigma$-complete ultrafilters plays a central role in the theory of large cardinals. Building on this perspective, Goldberg introduced and studied the Ultrapower Axiom (UA), which asserts
that for every pair of $\sigma$-complete ultrafilters, $U,W$, there are $\sigma$-complete ultrafilters $W^*\in M_U$ and $U^*\in M_W$ such that $M^{M_U}_{W^*} =M^{M_W}_{U^*}$ and $j^{M_U}_{W^*}\circ j_U = j^{M_{W}}_{U^*}\circ j_{W}$.\footnote{For a $\sigma$-complete ultrafilter $U$, $M_U$ denotes its ultrapower model, and $j_U\colon V\to M_U$ denotes the corresponding elementary embedding. If $U\in N$ for some inner model $N$ of $V$, we denote by $(M_U)^N$ and $j^{N}_U \colon N\to (M_U)^N$ the corresponding ultrapower and elementary embedding over $N$.} 
Goldberg’s analysis of UA has led to a series of deep structural results about the set-theoretic universe. One key discovery is that, under UA, the class of $\sigma$-complete ultrafilters is well-ordered by a natural ordering called the Ketonen order (see Definition~\ref{Def: Ketonen order} below). Goldberg further showed that UA is equivalent to the linearity of the Ketonen order.

In his work, Goldberg identified that Ketonen comparability between ultrafilters implies determinacy of certain infinite games, reminiscent of the Lipschitz order used in descriptive set theory to compare subsets of the Cantor space (see Definition~\ref{Def: Lipschitz order on ultrafilters} below). He proved that, under UA, the Ketonen order and the Lipschitz order coincide when restricted to $\sigma$-complete ultrafilters, and raised the question whether it is consistent for the two orders to disagree (see \cite[Question 9.2.10]{GoldbergUABook}). Our main result gives a positive answer to this question:

\begin{theorem}\label{theorem: intro}
	Consistently from a measurable cardinal, there are two $\sigma$-complete ultrafilters $\mathcal{V}, \mathcal{W}$ such that $\mathcal{V}$ is Lipschitz below $\mathcal{W}$, but $\mathcal{V}$ and $\mathcal{W}$ are Ketonen-incomparable. Furthermore, the same is also consistent with the assumption that the Weak Ultrapower Axiom holds.
\end{theorem}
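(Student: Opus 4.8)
The plan is to obtain the model by forcing over a ground model $V_{0}$ of $\mathrm{ZFC}+\mathrm{GCH}$ that satisfies the Ultrapower Axiom and carries a measurable cardinal $\ka$ with normal measure $U$ — for instance $V_{0}=L[U]$, or one of the fine-structural models in which Goldberg verified $\mathrm{UA}$; note that $V_{0}$ then also satisfies the Weak Ultrapower Axiom. Since under $\mathrm{UA}$ the Ketonen and Lipschitz orders coincide, the target model must fail $\mathrm{UA}$, so the whole task is to design a forcing $\po$ such that in $V_{0}^{\po}$ (i) there are $\sigma$-complete ultrafilters $\vc,\wc$ with $\vc<_{L}\wc$ but $\vc,\wc$ Ketonen-incomparable, while (ii) the Weak Ultrapower Axiom still holds. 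By the results recalled above we always have $<_{k}\;\subseteq\;<_{L}$, so it is enough to produce a winning strategy for Player II in the Lipschitz game $\mathcal G(\vc,\wc)$ (which gives $\vc\le_{L}\wc$) and, simultaneously, to show that neither ultrapower $M_{\vc}$ nor $M_{\wc}$ contains a coherent sequence of ultrafilters witnessing a Ketonen reduction in either direction.

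\textbf{The forcing.} We keep $\ka$ measurable and let $j=j_{U}\colon V_{0}\to M=M_{U}$. The poset $\po$ is an iteration (with appropriate support) of nonstationary-ideal forcings: a variant $\nsck$ of the quotient $\power(\ka)/\mathrm{NS}_{\ka}$ — with the parameter $\tau$ calibrating the order type of the generic branch — which adds a $\sigma$-complete ultrafilter $\vc$ concentrated on $\ka$; countable completeness of $\vc$ is preserved because the relevant stage of $\po$ is sufficiently closed / has a Prikry-type property, so that $\ka$ remains measurable and no new countable sequences of ordinals below $\ka$ appear. Threaded into $\po$ is the companion forcing $\nscjk$, the analogous poset at $j(\ka)$ computed in $M$, arranged (via a master condition and a direct-limit construction) so that $j$ lifts to $V_{0}[\vc]$. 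Pulling the $j(\ka)$-generic back to $V_{0}^{\po}$ yields a $\sigma$-complete ultrafilter $\wc$ concentrated on $j(\ka)$, whose ultrapower $M_{\wc}$ factors through $M$. The design requirement is that $M_{\wc}$ should see, along a single generic branch, a faithful copy of the data defining $\vc$ — enough to drive a reduction of $\vc$ to $\wc$ — while $\vc$ itself, together with every $V_{0}^{\po}$-sequence $\la\vc_{\al}:\al<j(\ka)\ra$ that could witness $\vc<_{k}\wc$, is excluded from $M_{\wc}$ by a genericity argument; and symmetrically for $\wc$ relative to $M_{\vc}$.

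\textbf{Verifying the two halves.} For $\vc<_{L}\wc$ one writes down Player II's strategy in $\mathcal G(\vc,\wc)$ explicitly: decoding from the full record of Player I's play the generic branch that I is building through $\nscjk$, Player II feeds it, stage by stage, through the lifted embedding to produce a branch through $\nsck$, thereby keeping II's position $\vc$-positive whenever I keeps his $\wc$-positive. The essential point is that this strategy is irreducibly history-dependent — at each move it must recompute from the entire past play — so it does not collapse to a single ground-model sequence $\la\vc_{\al}:\al<j(\ka)\ra$, which is precisely why it is a Lipschitz witness but not a Ketonen one. For Ketonen-incomparability one argues that any purported witnessing sequence would be decided by a dense subset of $\po$ that the generic avoids; the mutual genericity of the two generic branches (guaranteed by a product arrangement, or by an iteration in which neither coordinate reads the other) is what makes both $\vc\not<_{k}\wc$ and $\wc\not<_{k}\vc$ go through.

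\textbf{Preserving Weak UA, and the main obstacle.} Because $\po$ is a short iteration of $\mathrm{NS}$-type forcings with the relevant closure and chain-condition properties — one can arrange a Prikry-style or distributivity property on the tails — the comparison maps witnessing the Weak Ultrapower Axiom in $V_{0}$ lift to $V_{0}^{\po}$, yielding that $V_{0}^{\po}$ models Weak $\mathrm{UA}$; alternatively one invokes a preservation theorem for $\mathrm{WUA}$ under such forcings. I expect the real difficulty to be the simultaneous fulfilment of the two design goals of $\po$: it must inject into $M_{\wc}$ exactly enough internal structure for Player II's history-dependent strategy to succeed, yet so little that no $V_{0}^{\po}$-sequence witnessing a Ketonen reduction appears in either ultrapower. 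Producing a clean, provably winning strategy for Player II in the Lipschitz game — whose rules involve building generic-like branches and whose payoff compares "$\wc$-largeness" against "$\vc$-largeness" — is where the technical weight of the argument lies; the preservation of $\mathrm{WUA}$ is a secondary constraint, bounding how aggressive $\po$ may be.
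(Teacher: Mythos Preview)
Your proposal takes a route that is both different from and substantially less concrete than the paper's, and as written it does not go through. The paper does \emph{not} force with quotients of the nonstationary ideal; it forces over $L[U]$ with the simplest possible poset — a nonstationary-support product of three-element posets $\{0_{\qo},0,1\}$ indexed by inaccessibles below $\kappa$, i.e.\ it merely adds a generic function from the inaccessibles to $2$. The measures $\vc,\wc$ are not added by the forcing as generic ultrafilters: rather, the ground-model embedding $j_U$ lifts in two ways (according to the bit chosen at $\kappa$), yielding two normal measures $U_0,U_1$; one sets $\vc=U_0$ and takes $\wc$ to be the measure derived from $j_{(U_1)^2}$ with seed $j_U(\kappa)+\kappa$. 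Both live on $\kappa$, not on different cardinals. The Lipschitz reduction $\vc<_L\wc$ is witnessed, via the ultrapower characterization, by a set $Z\in M_\wc$ built from canonical name-codes: given $Y\subseteq\kappa_1$ in $M_\wc$, one recovers its least code $\vec\tau_Y$ and asks whether $\kappa$ lies in the evaluation of $\vec\tau_Y$ against the \emph{other} generic $G\times\{0\}\times G^*$. Ketonen-incomparability is a short rigidity argument: any Ketonen comparison restricts on $L[U]$ to a finite iterate $j_{U^m}$, so the image of $\kappa$ must be one of $\kappa,\kappa_1$; but the generic bit there is $0$ on the $\vc$-side and $1$ on the $\wc$-side, contradiction. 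Weak UA in the extension is not proved ad hoc but quoted from a prior construction.

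By contrast, your poset $\nsck$ — a variant of $\mathcal P(\kappa)/\mathrm{NS}_\kappa$ — does not have the closure or Prikry-type properties you invoke (such quotients are highly destructive, typically collapse $\kappa^+$, and do not keep $\kappa$ measurable), so the claimed $\sigma$-completeness of the generic ultrafilter and the lifting of $j$ are unsupported. Your Player II argument is also misaligned with the definitions: $\vc<_L\wc$ is witnessed by a \emph{Player I} strategy in $G_\kappa(\wc,\vc)$ (equivalently, a super-Lipschitz reduction), not a Player II strategy in $\mathcal G(\vc,\wc)$. Finally, the Ketonen-incomparability and Weak-UA-preservation steps are left as assertions (“a density argument”, “a preservation theorem”) with no mechanism indicated; in the paper both rest on very specific features of $L[U]$ and of the particular nonstationary-support product. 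The missing idea is precisely the bit-flipping/coding trick: force something trivial enough that $L[U]$-rigidity survives, lift $j_U$ two ways, and exploit the fact that the Lipschitz witness $Z$ may read names and re-evaluate them against a different generic — something no $\sigma$-complete ultrafilter in $M_\wc$ can do.
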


The \textit{Weak Ultrapower Axiom} is obtained from UA by removing the requirement that $j^{M_U}_{W^*}\circ j_U = j^{M_{W}}_{U^*}\circ j_{W}$. Models of the Weak UA+$\neg$UA were constructed in \cite{benhamouGoldberg2024applications} and more recently in \cite{kaplan2025number}. We will rely on the construction from \cite{kaplan2025number} in the proof of Theorem \ref{theorem: intro}.

The structure of this paper is as follows: 
\begin{itemize}
	\item In Section 2 we define the Ketonen and Lipschitz orders and outline their basic properties; all the results in this section are due to Goldberg. 
	\item In Section 3 we show that, consistently, the Ketonen and Lipschitz orders do not coincide (however, this is not done in a model of the Weak UA). See Theorem \ref{Theorem: Separating Lipschitz from Ketonen the Easy way}. 
	\item In Section 4 we sketch the basic properties of forcing with  nonstationary support products.
	\item In Section 5, we separate the Ketonen and Lipschitz orders in a model of the Weak UA, thereby completing the proof of Theorem \ref{theorem: intro}.
\end{itemize}

\textbf{Acknowledgments:} The author deeply thanks Gabe Goldberg and Eilon Bilinsky for many conversations on the subject of this paper.

\section{Preliminaries - the Ketonen and Lipschitz orders}

In this section, we present the Ketonen and Lipschitz orders and draw the connections between them. All of the results in this section are due to Goldberg, and can be found in detail in \cite{GoldbergUABook}.

\begin{definition}[The Ketonen order\footnote{The modern formulation of the Ketonen order is due to Goldberg, building on Ketonen’s earlier work, which treated only weakly normal ultrafilters \cite{Ketonen1972strongcompactnesscardinalsins}.}]\label{Def: Ketonen order}
	Let $U,W$ be $\sigma$-complete ultrafilters. We say that $U$ is Ketonen below $W$, and denote $U<_k W$, if and only if one of the following equivalent conditions hold:
	\begin{enumerate}
		\item There exists $I\in W$ and a sequence $\la U_\xi \colon \xi\in I \ra$ of $\sigma$-complete ultrafilters, such that each $U_\xi$ concentrates on $\xi$, and, for every $X\subseteq \kappa$ (where $\kappa$ is the underlying ordinal of $U$), 
		$$X\in U \iff \{ \xi\in I \colon X\cap \xi \in U_\xi \}\in W.$$
		\item There exists a $\sigma$-complete ultrafilter $U^*\in M_W$ and an elementary embedding $k\colon M_U\to M^{M_W}_{U^*}$ such that $k\circ j_U = j^{M_W}_{U^*}\circ j_W$, and $k\left( [Id]_U \right)< j_{U^*}\left( [Id]_W \right)$.
	\end{enumerate}

\end{definition}

For the equivalence between the definitions, see \cite[Lemma 3.3.4]{GoldbergUABook}. The Ketonen order is a strict well-founded order on the class of all $\sigma$-complete ultrafilters (see \cite[Subsection 3.3.2]{GoldbergUABook}). When restricted to normal measures, the Ketonen order coincides with the Mitchell order (see \cite[Theorem 3.4.1]{GoldbergUABook}, or, alternatively, Corollary \ref{Corollary: Ketonen and Lipschitz are Mitchell on normal measures} below). 

The study of the Ketonen order is especially interesting in the context of Goldberg's Ultrapower Axiom. Recall that the \textit{Ultrapower Axiom} (UA) is the assertion that for every pair of $\sigma$-complete ultrafilters, $U,W$, there are $\sigma$-complete ultrafilters $U^*\in M_W$ and $W^*\in M_U$ such that $M^{M_U}_{W^*} =M^{M_W}_{U^*}$, and $j^{M_U}_{W^*}\circ j_U = j^{M_{W}}_{U^*}\circ j_{W}$. The \textit{Weak Ultrapower Axiom} is obtained from UA by removing the requirement that $j^{M_U}_{W^*}\circ j_U = j^{M_{W}}_{U^*}\circ j_{W}$.

\begin{theorem}[Goldberg, {\cite[Theorem 3.5.1]{GoldbergUABook}}]\label{Theorem: Goldberg, UA is equivalent to the linearity of Ketonen}
	$\mbox{UA}$ is equivalent to the linearity of the Ketonen order.
\end{theorem}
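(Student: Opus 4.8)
The plan is to prove the two implications separately; the forward direction is soft, and the real work lies in deriving UA from linearity.

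\textbf{UA implies linearity.} Given $\sigma$-complete ultrafilters $U$ and $W$, apply UA to obtain $U^{*}\in M_{W}$ and $W^{*}\in M_{U}$ with $N:=M^{M_{U}}_{W^{*}}=M^{M_{W}}_{U^{*}}$ and a common embedding $i:=j^{M_{U}}_{W^{*}}\circ j_{U}=j^{M_{W}}_{U^{*}}\circ j_{W}$. Put $k_{0}:=j^{M_{U}}_{W^{*}}\colon M_{U}\to N$ and $k_{1}:=j^{M_{W}}_{U^{*}}\colon M_{W}\to N$, and let $a_{0}:=k_{0}([\mathrm{id}]_{U})$ and $a_{1}:=k_{1}([\mathrm{id}]_{W})$, two ordinals of $N$. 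I would split on the order of $a_{0}$ and $a_{1}$. If $a_{0}<a_{1}$, the pair $(k_{0},U^{*})$ witnesses condition~(2) of Definition~\ref{Def: Ketonen order} that $U<_{k}W$: $k_{0}\colon M_{U}\to M^{M_{W}}_{U^{*}}$ is elementary, $k_{0}\circ j_{U}=i=k_{1}\circ j_{W}=j^{M_{W}}_{U^{*}}\circ j_{W}$, and $k_{0}([\mathrm{id}]_{U})=a_{0}<a_{1}=j^{M_{W}}_{U^{*}}([\mathrm{id}]_{W})$. Symmetrically, $a_{1}<a_{0}$ gives $W<_{k}U$. If $a_{0}=a_{1}=:a$, then, using $k_{0}\circ j_{U}=i=k_{1}\circ j_{W}$ and elementarity, $X\in U\iff a\in i(X)\iff X\in W$ for every $X$ in the relevant domain, so $U=W$. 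Hence any two $\sigma$-complete ultrafilters are $<_{k}$-comparable.

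\textbf{Linearity implies UA.} This is the substantive direction. The first observation is that already in $\mathrm{ZFC}$, for any $U,W$ there is \emph{some} $\sigma$-complete ultrafilter $W^{*}\in M_{U}$ together with an elementary $h\colon M_{W}\to M^{M_{U}}_{W^{*}}$ satisfying $h\circ j_{W}=j^{M_{U}}_{W^{*}}\circ j_{U}$: take $W^{*}=j_{U}(W)$ and $h=j_{U}\uhr M_{W}$, noting that $M^{M_{U}}_{j_{U}(W)}$ and $j^{M_{U}}_{j_{U}(W)}$ are just $j_{U}$ applied to the definitions of $M_{W}$ and $j_{W}$, so that $h$ is an elementary map into $j_{U}(M_{W})=M^{M_{U}}_{j_{U}(W)}$ with $h\circ j_{W}=j^{M_{U}}_{j_{U}(W)}\circ j_{U}$. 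Since $<_{k}$ is, by hypothesis, a well-order, so is $<_{k}^{M_{U}}$ by elementarity, and we may define the \emph{translation} $t_{U}(W)$ to be the $<_{k}^{M_{U}}$-least such $W^{*}$; symmetrically define $t_{W}(U)\in M_{W}$. The assertion of UA is now exactly that these two translations realize the same model with commuting embeddings, i.e.\ $M^{M_{U}}_{t_{U}(W)}=M^{M_{W}}_{t_{W}(U)}$ and $j^{M_{U}}_{t_{U}(W)}\circ j_{U}=j^{M_{W}}_{t_{W}(U)}\circ j_{W}$. I would prove this by induction on the $<_{k}$-rank of the pair $(U,W)$: linearity guarantees that the internal ultrafilters generated by the minimal witnesses on the two sides are themselves $<_{k}$-comparable, and a mutual-minimality argument then forces the two translations to agree on both the model and the embedding; this is also where one checks that $\sigma$-completeness is preserved under translation.

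\textbf{Main obstacle.} The delicate part is this matching step. In $\mathrm{ZFC}$ the translations $t_{U}(W)$ and $t_{W}(U)$ exist unconditionally; the entire content of UA is that they land on the \emph{same} inner model with \emph{commuting} embeddings, rather than merely furnishing one-sided factorizations. Converting linearity of $<_{k}$ into this equality is where the genuine argument resides: it combines well-foundedness (to run the rank induction), uniqueness of the minimal comparisons, and careful tracking of the seed ordinals $k_{0}([\mathrm{id}]_{U})$ and $k_{1}([\mathrm{id}]_{W})$ inside the comparison models. I expect this to be the technical heart of the proof, whereas the forward direction and the mere existence of the translations are routine.
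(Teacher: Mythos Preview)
The paper does not give its own proof of this theorem; it is stated with a citation to \cite[Theorem~3.5.1]{GoldbergUABook} and used as a black box. So there is nothing in the paper to compare your argument against directly.

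That said, your forward direction is correct and complete: given a UA-comparison $(U^{*},W^{*})$ with common target $N$ and common embedding $i$, comparing the seed images $k_{0}([\mathrm{id}]_{U})$ and $k_{1}([\mathrm{id}]_{W})$ in $N$ immediately yields either $U<_{k}W$, $W<_{k}U$, or $U=W$ via Definition~\ref{Def: Ketonen order}(2). This is exactly Goldberg's argument.

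Your reverse direction is the right \emph{outline}---the translation functions $t_{U}(W)$ defined via the $<_{k}^{M_{U}}$-least internal comparison, with $j_{U}(W)$ supplying the nonemptiness witness, is precisely Goldberg's setup---but as you yourself flag, the actual content is missing. You write ``I would prove this by induction on the $<_{k}$-rank'' and ``a mutual-minimality argument then forces the two translations to agree,'' but this is the whole theorem: one must show that the two one-sided minimal comparisons in fact hit the same model with the same embedding, and this requires a genuine argument (in Goldberg's treatment, the key step is showing that $t_{U}(W)$ and $t_{W}(U)$ are themselves related via a further internal comparison whose minimality forces the identification). What you have written is a correct plan with the central lemma stated but not proved; it is not a gap in the sense of a wrong idea, but it is not yet a proof of the harder direction.
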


Thus, under the UA, the class of all $\sigma$-complete ultrafilters is well ordered by the Ketonen order. Furthermore, every such ultrafilter is ordinal definable via its rank with respect to the Ketonen order.

We proceed and define the Lipschitz order between $\sigma$-complete ultrafilters. For that, we describe the Lipschitz game of length $\kappa$, $G_{\kappa}(W,U)$, where $W,U$ are subsets of $\mathcal{P}(\kappa)$ for an ordinal $\kappa$. The game is being held between two players, I and II, and consists of $\kappa$ stages. On the $i$-th stage ($i<\kappa$), Player I chooses a bit $a(i)\in \{0,1\}$, and Player II responds with a bit $b(i)\in \{0,1\}$. Since Player I moves first at any stage, they are aware of the sequences $\la a(j) \colon j<i \ra, \la b(j) \colon j<i \ra$ constructed so far; Player II moves second, being aware of the values of $\la a(j) \colon j\leq i  \ra, \la b(j) \colon j<i \ra$. After $\kappa$ rounds, the players have constructed a pair of subsets of $\kappa$,
$$ a = \{ i<\kappa \colon a(i) = 1 \} $$
$$ b = \{ i<\kappa \colon b(i) = 1 \}. $$
Player II wins if $a\in W\leftrightarrow b\in U$. Otherwise, Player I wins. 

The (strict) Lipschitz order is defined by setting $U<_L W$ if and only if Player I has a winning strategy in both games $G_{\kappa}(W,U)$ and $G_{\kappa}(\mathcal{P}(\kappa)\setminus W, U)$.

An equivalent definition of the Lipschitz order may be given in terms of Lipschitz functions. For an ordinal $\kappa$, we say that a function $f\colon \mathcal{P}(\kappa)\to \mathcal{P}(\kappa)$ is a super-Lipschitz function if for every $\xi<\kappa$ and $A\subseteq \kappa$, the value of $f(A)\cap (\xi+1)$ depends only on $A\cap \xi$, in the sense that--
$$ f(A)\cap (\xi+1) = f(A\cap \xi)\cap (\xi+1).$$
We say that $U$ is super-Lipschitz reducible to $W$ if there exists a super-Lipschitz function $f\colon \mathcal{P}(\kappa)\to \mathcal{P}(\kappa)$ such that $f^{-1}[W] = U$. We call such $f$ a super-Lipschitz reduction of $U$ to $W$.

\begin{Lemma}
	Player I has a winning strategy in $G_{\kappa}(W,U)$ if and only if $U$ is super-Lipschitz reducible to $\mathcal{P}(\kappa)\setminus W$. In the case where $W$ is an ultrafilter, this is equivalent to $U$ being super-Lipschitz reducible to $W$.
\end{Lemma}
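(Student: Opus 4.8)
The plan is to realize strategies for Player~I in $G_{\kappa}(W,U)$ as super-Lipschitz functions and then translate the winning condition. The key observation is the following matching of information sets: since Player~I moves first at each round, in any run in which Player~I follows a fixed strategy, the bit $a(i)$ is—after unwinding the recursion—a function of Player~II's earlier bits $\langle b(j) : j<i \rangle$ only, i.e.\ a function of $b\cap i$; conversely, the information about $b$ available to Player~I when choosing $a(i)$ is exactly $b\cap i$. Thus I will first set up a bijection between strategies $\sigma$ for Player~I in $G_{\kappa}(W,U)$ and super-Lipschitz functions $f\colon\mathcal{P}(\kappa)\to\mathcal{P}(\kappa)$: to $\sigma$ associate $f_{\sigma}(b)=a$, the $a$-side of the run of $\sigma$ against Player~II playing $b$; the inverse sends a super-Lipschitz $f$ to the strategy ``at round $i$, having seen $b\cap i$, play $a(i)=1$ iff $i\in f(b\cap i)$''. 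That this strategy indeed produces the run with $a=f(b)$, and that $f_{\sigma}$ is genuinely super-Lipschitz, both reduce to the defining identity $f(b)\cap(\xi+1)=f(b\cap\xi)\cap(\xi+1)$, i.e.\ to the fact that bit $\xi$ of $f(b)$ does not depend on bit $\xi$ of $b$. The only delicate point in the whole argument—and the place I expect to have to be careful—is this off-by-one bookkeeping between the ``$\xi+1$'' appearing in the super-Lipschitz condition and the ``$<i$'' appearing in the players' information sets; everything else is routine.

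Next I read off the winning condition. Player~I wins a run iff $\lnot(a\in W \leftrightarrow b\in U)$, i.e.\ iff $a\in W \leftrightarrow b\notin U$. Hence $\sigma$ is a winning strategy for Player~I iff for every $b\subseteq\kappa$ we have $f_{\sigma}(b)\in W \leftrightarrow b\notin U$, equivalently $b\in U \leftrightarrow f_{\sigma}(b)\in\mathcal{P}(\kappa)\setminus W$, i.e.\ iff $f_{\sigma}^{-1}[\mathcal{P}(\kappa)\setminus W]=U$. Combined with the bijection above, this is exactly the assertion that Player~I has a winning strategy in $G_{\kappa}(W,U)$ if and only if $U$ is super-Lipschitz reducible to $\mathcal{P}(\kappa)\setminus W$.

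For the addendum, assume $W$ is an ultrafilter, so that $B\notin W \iff \kappa\setminus B\in W$ for every $B\subseteq\kappa$. The point is that if $f$ is super-Lipschitz then so is $c\circ f$, where $c(A)=\kappa\setminus A$: from $f(A)\cap(\xi+1)=f(A\cap\xi)\cap(\xi+1)$ one gets $(\kappa\setminus f(A))\cap(\xi+1)=(\kappa\setminus f(A\cap\xi))\cap(\xi+1)$, so the composite satisfies the super-Lipschitz condition (note that $c$ by itself is \emph{not} super-Lipschitz—it is precisely the one-step delay already present in $f$ that rescues the composite). Since $W$ is an ultrafilter, $(c\circ f)(A)\in W \iff f(A)\notin W \iff f(A)\in\mathcal{P}(\kappa)\setminus W$, so $f\mapsto c\circ f$ sends a super-Lipschitz reduction of $U$ to $\mathcal{P}(\kappa)\setminus W$ to a super-Lipschitz reduction of $U$ to $W$; and, since $c\circ c=\mathrm{id}$ and using again that $W$ is an ultrafilter, it sends a super-Lipschitz reduction of $U$ to $W$ back to one of $U$ to $\mathcal{P}(\kappa)\setminus W$. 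Hence the two reducibilities coincide when $W$ is an ultrafilter, which completes the proof.
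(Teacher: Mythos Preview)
Your proof is correct and follows essentially the same approach as the paper: encode Player~I's strategy as the super-Lipschitz function $b\mapsto a$, translate the winning condition into $f^{-1}[\mathcal{P}(\kappa)\setminus W]=U$, and for the ultrafilter clause use that post-composing with complementation $A\mapsto\kappa\setminus A$ preserves the super-Lipschitz property. The only cosmetic difference is that the paper treats the two directions separately rather than packaging them as a ``bijection'' (strictly speaking your map from strategies to super-Lipschitz functions is many-to-one, since strategies with different off-path behavior collapse, but this is irrelevant to the argument).
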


\begin{proof}
	It's not hard to check that whenever $W$ is an ultrafilter, a function $f$ is a super-Lipschitz reduction of $U$ to $W$ if and only if the function $f^*(A) =\kappa\setminus f( A)$ is a super-Lipschitz reduction of $U$ to $\mathcal{P}(\kappa)\setminus W$. Thus, we only need to prove the first part of the lemma.
	
	Assume that Player I has a winning strategy in $G_{\kappa}(W,U)$. Given $A\subseteq \kappa$, let $\la b^A(i)  \colon i<\kappa\ra$ be the characteristic function of $A$, namely, for every $i<\kappa$, $b^A(i) = 1 \leftrightarrow i\in A $. Let $\la a^A(i) \colon i<\kappa \ra$ be the sequence of moves of Player I, when played according to their winning strategy, where Player II plays $b^A(i)$ on their $i$-th move (for clarity, we stress that $a^A(i)$ is the move on the $i$-th round, and we assume that $a^A(0) = 1$). For every $A\subseteq \kappa$, let $f(A) = \{ i<\kappa \colon a^A(i) = 0 \}$. Note that $f$ is a super-Lipschitz function. Let us argue that $U = f^{-1}[\mathcal{P}(\kappa)\setminus W]$. Indeed, since the strategy is winning for Player I, we have for every $A\subseteq \kappa$, $$A\in U \leftrightarrow \{ i<\kappa \colon b^A(i) = 1 \}\in U \leftrightarrow \{ i<\kappa \colon a^{A}(i) = 1 \}\notin W\leftrightarrow f(A)\in \mathcal{P}(\kappa)\setminus W.$$
	For the other direction, assume that $f\colon \mathcal{P}(\kappa)\to \mathcal{P}(\kappa)$ is super-Lipschitz and $U = f^{-1}[\mathcal{P}(\kappa)\setminus W]$. Let us define a winning strategy for Player I in $G_{\kappa}(W,U)$. Assume that $i<\kappa$ and the players have  completed $i$ rounds, constructing sequences $\la a(j) \colon j<i \ra, \la b(j) \colon j<i \ra$. In the $i$-th round, Player I checks whether $i\in f(\{ j<i \colon b(j) = 1 \})$. Player I chooses $a(i) = 1$ if and only if the answer is positive. This is a winning strategy for Player I. Indeed, assume that $\la a(i) \colon i<\kappa \ra, \la b(i) \colon i<\kappa \ra$ are the full sequences of moves of Players I, II, respectively. Then:
	\begin{align*}
		\{ i<\kappa \colon a(i) = 1 \}\notin W \leftrightarrow& \  \{i<\kappa \colon i\in f\left(  \{ j<\kappa \colon b(j) = 1  \}\cap i \right) \}\notin W \\  \leftrightarrow & \  f\left( \{j<\kappa \colon b(j) = 1\} \right) \notin W
		\\ \leftrightarrow &\ \{ j<i \colon b(j) = 1 \}\in U.
	\end{align*}
\end{proof}

In the light of the previous lemma, we define:

\begin{definition}[The Lipschitz order on ultrafilters]\label{Def: Lipschitz order on ultrafilters}
Let $U,W\subseteq \mathcal{P}(\kappa)$ be ultrafilters on $\kappa$. We say that $U$ is Lipschitz below $W$, and denote $U<_L W$, if one of the following equivalent conditions hold:
\begin{itemize}
	\item player I has a winning strategy in $G_{\kappa}(W,U)$.
	\item there exists a super-Lipschitz function $f\colon \mathcal{P}(\kappa)\to \mathcal{P}(\kappa)$ that reduces $U$ to $W$, in the sense that $f^{-1}[W] = U$.
\end{itemize}

\end{definition}

The Lipschitz order is a strict partial order on $\sigma$-complete ultrafilters (see \cite[Corollary 3.4.29]{GoldbergUABook}). To the best of our knowledge, it is still not known whether the Lipschitz order on $\sigma$-complete ultrafilters must be well-founded (but it is well-founded under UA, see Corollary \ref{Corollary: UA implies that Ketonen is Lipschitz} below). We remark that the Lipschitz order on arbitrary subsets of $\mathcal{P}(\kappa)$ is ill-founded in models of AC (see \cite[Theorem 3.4.30]{GoldbergUABook}); however, in this paper, we focus only on the Lipschitz order restricted to $\sigma$-complete ultrafilters. We freely refer to it as the “Lipschitz order,” without mentioning the restriction to pairs of ultrafilters.
We proceed and prove that the Lipschitz order extends the Ketonen order. We provide a proof directly from the definitions given above (for an alternative justification, see Proposition \ref{Proposition: equivalent characterization of Lipschitz and Ketonen}). 

\begin{proposition}[Goldberg\footnote{A similar result was independently observed by Eilon Bilinsky and the author, based on an equivalent definition of the Lipschitz order; the equivalence between the definitions was observed by Goldberg.}]\label{Lemma: Ketonen implies Lipschitz}
	Assume that $U,W$ are $\sigma$-complete ultrafilters on $\kappa$. Then $U<_k W$ implies $U<_L W$.
\end{proposition}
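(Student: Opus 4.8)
The plan is to start from the witness given by condition (1) of the Ketonen order and manufacture a super-Lipschitz reduction $f\colon\mathcal P(\kappa)\to\mathcal P(\kappa)$ with $f^{-1}[W]=U$, which by Definition~\ref{Def: Lipschitz order on ultrafilters} is exactly what $U<_L W$ asks for. So fix $I\in W$ and a sequence $\la U_\xi\colon\xi\in I\ra$ of $\sigma$-complete ultrafilters with $U_\xi$ concentrating on $\xi$, such that for every $X\subseteq\kappa$ we have $X\in U\iff\{\xi\in I\colon X\cap\xi\in U_\xi\}\in W$. The idea is that a super-Lipschitz function, by definition, is a function where $f(A)\cap(\xi+1)$ is determined by $A\cap\xi$; dually, whether $\xi\in f(A)$ may depend on all of $A\cap\xi$. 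So I want to decide, coordinate by coordinate, whether $\xi\in f(A)$ using only the information $A\cap\xi$, in such a way that $f(A)\in W$ exactly when $A\in U$.

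The key step is the coordinate-wise decision rule. Given $A\subseteq\kappa$ and $\xi<\kappa$, I will put $\xi\in f(A)$ if and only if $\xi\in I$ and $A\cap\xi\in U_\xi$. This is well-defined from $A\cap\xi$ alone, so $f$ is super-Lipschitz. Now compute: for any $A\subseteq\kappa$,
\[
f(A)=\{\xi\in I\colon A\cap\xi\in U_\xi\},
\]
and the defining property of the Ketonen witness says precisely that this set is in $W$ iff $A\in U$. Hence $f^{-1}[W]=U$, and we are done. The one point that needs a remark is that $f(A)$ as defined is a subset of $I$; since $I\in W$, membership of a subset-of-$I$ in $W$ is governed entirely by the above equivalence, so nothing is lost. (If one prefers $f$ to land genuinely in $\mathcal P(\kappa)$ with no constraint, it already does — the range just happens to sit inside $\mathcal P(I)$.)

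I expect the main (mild) obstacle to be bookkeeping around the edge cases: checking that $f$ is literally super-Lipschitz in the strict sense demanded — that $f(A)\cap(\xi+1)=f(A\cap\xi)\cap(\xi+1)$ for all $\xi$ — which amounts to noting that for $\eta\le\xi$, the condition ``$\eta\in I$ and $A\cap\eta\in U_\eta$'' depends only on $A\cap\eta\subseteq A\cap\xi$, and that $U_\eta$ concentrates on $\eta$ so $A\cap\eta\in U_\eta\iff (A\cap\xi)\cap\eta\in U_\eta$; this is immediate but should be stated. There is genuinely no hard analytic or combinatorial content here — the entire force of the implication is packaged into condition (1) of Definition~\ref{Def: Ketonen order}, and the proof is essentially the observation that a Ketonen witness $\la U_\xi\ra$ \emph{is} a super-Lipschitz reduction once read correctly. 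Alternatively, one could phrase the same argument game-theoretically: describe Player I's strategy in $G_\kappa(W,U)$ where, having seen Player II's bits $\la b(j)\colon j<\xi\ra$ coding a set $B\cap\xi$, Player I plays $a(\xi)=1$ iff $\xi\in I$ and $B\cap\xi\in U_\xi$; the winning condition for Player I then follows from the same computation together with the Lemma above relating strategies to super-Lipschitz reductions.
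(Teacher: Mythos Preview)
Your proof is correct and is essentially the paper's own argument: the paper likewise fixes the Ketonen witness $\la U_\xi\colon\xi\in I\ra$ and (parenthetically) points out that $f(X)=\{\xi\in I\colon X\cap\xi\in U_\xi\}$ is a super-Lipschitz reduction of $U$ to $W$, while presenting the equivalent game-theoretic strategy as the primary version---you simply swap which of the two is foregrounded. One small slip in your closing alternative: in $G_\kappa(W,U)$ Player~I must arrange $a\in W\not\leftrightarrow b\in U$, so Player~I should play $a(\xi)=0$ (not $1$) when $\xi\in I$ and $B\cap\xi\in U_\xi$; this does not affect your main argument via $f$.
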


\begin{proof}
	Let $I\in W$ and $\la U_i \colon i\in I \ra$ be a sequence of $\sigma$-complete ultrafilters, each $U_i$ concentrates on $i$, such that $ U = \{  X\subseteq \kappa \colon \{i\in I \colon X\cap i \in U_i \}\in W \}$. 
	
	Let us define a strategy for Player I in $G_\kappa(U,W)$ (alternatively, one can check that the function $f\colon \mathcal{P}(\kappa)\to \mathcal{P}(\kappa)$ defined by $f(X) = \{ i\in I \colon X\cap i\in U_i \}$ is super-Lipschitz and $f^{-1}[W] = U$). Assume that $i<\kappa$, and the players constructed the sequences $\la a(j) \colon j<i \ra$ and $\la b(j) \colon j<i \ra$. Player I then checks if $i\in I$; if not, they play as they wish. If $i\in I$, Player I checks if $\{ j<i \colon b\left( j \right) = 1 \}\in U_{i}$, and plays $a(i)=0$ if the answer is positive, and $a(i)=1$ otherwise.
	
	We argue that the above describes a winning strategy for Player I. Let $\vec{a} = \la a(i) \colon i<\kappa \ra, \vec{b} = \la b(i) \colon i<\kappa \ra$ be the resulting sequence of moves. Then, indeed, 
	\begin{align*}
		 \{ i<\kappa \colon a(i)=1 \}\in W 	\iff & \{  i\in I \colon a(i)=1  \}\in W\\
		\iff &\{  i\in I \colon \{ j<i \colon b(j)=0 \}\in U_i  \}\in W \\
		\iff & \{ j<\kappa \colon b(j)=0 \}\in U. 
	\end{align*}	
\end{proof}

Since the UA is equivalent to the linearity of the Ketonen order, we can immediately deduce the following:

\begin{corollary}[Goldberg]\label{Corollary: UA implies that Ketonen is Lipschitz}
	Assume UA. Then the Ketonen order and the Lipschitz order coincide, and both are linear.
\end{corollary}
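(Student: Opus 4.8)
The plan is to combine Proposition~\ref{Lemma: Ketonen implies Lipschitz} (which already gives $<_k\,\subseteq\,<_L$) with Theorem~\ref{Theorem: Goldberg, UA is equivalent to the linearity of Ketonen} (UA is equivalent to linearity of $<_k$), using nothing about $<_L$ beyond the fact, recalled above, that it is a strict partial order on $\sigma$-complete ultrafilters. So the only real task is to establish the reverse inclusion $<_L\,\subseteq\,<_k$; once that is done, $<_k\,=\,<_L$, linearity of $<_k$ transfers verbatim to $<_L$, and (if one wants it) well-foundedness of $<_k$ does too.

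For the reverse inclusion I would argue as follows. Suppose $U<_L W$ for $\sigma$-complete ultrafilters $U,W$ on a common underlying ordinal $\kappa$. Since $<_L$ is irreflexive, $U\neq W$. By Theorem~\ref{Theorem: Goldberg, UA is equivalent to the linearity of Ketonen}, UA makes $<_k$ linear, so for the distinct ultrafilters $U,W$ exactly one of $U<_k W$ or $W<_k U$ holds. In the first case we are done. In the second case, Proposition~\ref{Lemma: Ketonen implies Lipschitz} upgrades $W<_k U$ to $W<_L U$; together with $U<_L W$, transitivity of $<_L$ gives $U<_L U$, contradicting irreflexivity. Hence $U<_k W$, proving $<_L\,\subseteq\,<_k$ and therefore $<_k\,=\,<_L$.

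Finally, both orders are linear: $<_k$ is linear by UA via Theorem~\ref{Theorem: Goldberg, UA is equivalent to the linearity of Ketonen}, and $<_L$ inherits this since it now equals $<_k$; likewise well-foundedness of $<_k$ passes to $<_L$. I do not expect any genuine obstacle — all the content is packaged in the Proposition and the Theorem — and the only point needing a moment's care is the bookkeeping that rules out the $2$-cycle $U<_L W<_L U$, which is handled by irreflexivity and transitivity of $<_L$ after noting $U\neq W$.
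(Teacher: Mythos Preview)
Your proposal is correct and matches the paper's approach: the paper simply states that the corollary follows immediately from the linearity of the Ketonen order under UA together with Proposition~\ref{Lemma: Ketonen implies Lipschitz}, and you have spelled out exactly that argument, using irreflexivity and transitivity of $<_L$ to rule out the alternative $W<_k U$.
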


We conclude this section by mentioning alternative characterizations of the Ketonen and Lipschitz orders, due to Goldberg. This characterization yields a simpler proof of Proposition \ref{Lemma: Ketonen implies Lipschitz}.

\begin{definition}
	Let $\kappa$ be an ordinal. A set $Z\subseteq \mathcal{P}(\kappa)$ concentrates on a set $S\subseteq \kappa$ if for every $X,Y\subseteq \kappa$, if $X\cap S = Y\cap S$ then $X\in Z\leftrightarrow Y\in Z$.
\end{definition}

\begin{proposition}[Goldberg] \label{Proposition: equivalent characterization of Lipschitz and Ketonen}
	Let $U,W$ be $\sigma$-complete ultrafilters on some ordinal $\kappa$. 
	\begin{enumerate}
		\item $U<_L W$ if and only if there exists $Z\in M_W$ such that $Z$ concentrates on $[Id]_W$ and for every $A\subseteq \kappa$, $A\in U \leftrightarrow j_W(A)\cap \delta\in Z$.
		\item $U<_k W$ if and only if there exists $Z\in M_W$ such that $Z$ is a $\sigma$-complete ultrafilter concentrating on $[Id]_W$, and for every $A\subseteq \kappa$, $A\in U \leftrightarrow j_W(A)\cap \delta\in Z$.
	\end{enumerate}
\end{proposition}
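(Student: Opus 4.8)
The plan is to unwind both equivalences directly via \L o\'s's theorem applied to $M_W$. Throughout, write $\delta := [\mathrm{Id}]_W$. The two parts run in parallel; the only extra ingredient in part~(2) is the absoluteness of ``is a $\sigma$-complete ultrafilter'' between $V$ and $M_W$.

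For part~(1), I would first prove $U<_L W \Rightarrow$ the existence of such a $Z$. By Definition~\ref{Def: Lipschitz order on ultrafilters} fix a super-Lipschitz $f\colon \mathcal{P}(\kappa)\to\mathcal{P}(\kappa)$ with $f^{-1}[W]=U$, and for each $\xi<\kappa$ put $Z_\xi := \{X\subseteq\kappa : \xi\in f(X)\}$. The super-Lipschitz identity $f(X)\cap(\xi+1)=f(X\cap\xi)\cap(\xi+1)$ gives $\xi\in f(X)\iff\xi\in f(X\cap\xi)$, so each $Z_\xi$ concentrates on $\xi$. Let $Z := [\langle Z_\xi : \xi<\kappa\rangle]_W\in M_W$; by \L o\'s's theorem $M_W$ sees that $Z$ concentrates on $[\mathrm{Id}]_W=\delta$. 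Since $j_W(A)=[\langle A : \xi<\kappa\rangle]_W$ and $\delta=[\langle \xi : \xi<\kappa\rangle]_W$, we have $j_W(A)\cap\delta=[\langle A\cap\xi : \xi<\kappa\rangle]_W$, so \L o\'s gives $j_W(A)\cap\delta\in Z \iff \{\xi<\kappa : A\cap\xi\in Z_\xi\}\in W$; and $A\cap\xi\in Z_\xi \iff \xi\in f(A\cap\xi)\iff\xi\in f(A)$, so this set is exactly $f(A)$ and hence $j_W(A)\cap\delta\in Z\iff f(A)\in W\iff A\in U$. For the converse, pick any function $\xi\mapsto Z_\xi$ representing a witness $Z$; by \L o\'s there is $I\in W$ such that for $\xi\in I$, $Z_\xi\subseteq\mathcal{P}(\kappa)$ concentrates on $\xi$. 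Then $f(A):=\{\xi\in I : A\cap\xi\in Z_\xi\}$ is super-Lipschitz---for $\eta\le\xi$ one has $(A\cap\xi)\cap\eta=A\cap\eta$, whence $f(A)\cap(\xi+1)=f(A\cap\xi)\cap(\xi+1)$---and $f^{-1}[W]=U$ by the same \L o\'s computation (using $I\in W$), so $U<_L W$ by Definition~\ref{Def: Lipschitz order on ultrafilters}.

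For part~(2) I would run the identical argument with the $Z_\xi$'s replaced by $\sigma$-complete ultrafilters. In the forward direction, Definition~\ref{Def: Ketonen order}(1) supplies $I\in W$ and a sequence $\langle U_\xi : \xi\in I\rangle$ of $\sigma$-complete ultrafilters, each $U_\xi$ concentrating on $\xi$, with $A\in U \iff \{\xi\in I : A\cap\xi\in U_\xi\}\in W$; setting $Z := [\langle U_\xi : \xi\in I\rangle]_W$, \L o\'s's theorem yields that $M_W$ believes $Z$ is a $\sigma$-complete ultrafilter concentrating on $\delta$, and the defining condition becomes $A\in U\iff j_W(A)\cap\delta\in Z$ exactly as above. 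Conversely, a witness $Z$ pulls back along any representing function to a set $I\in W$ on which each $U_\xi$ is a $\sigma$-complete ultrafilter concentrating on $\xi$, and $A\in U\iff j_W(A)\cap\delta\in Z$ then unwinds (again using $I\in W$) precisely into the condition of Definition~\ref{Def: Ketonen order}(1), giving $U<_k W$.

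The one point that is not pure \L o\'s bookkeeping---and the step I would be most careful about---is the absoluteness of ``is a $\sigma$-complete ultrafilter'' between $V$ and $M_W$: since $W$ is $\sigma$-complete, $M_W$ is closed under $\omega$-sequences, so any countable subfamily of $Z$ lying in $M_W$ (and hence the relevant countable intersection) is computed correctly there, while any would-be countable witness in $V$ to a failure of $\sigma$-completeness of some $U_\xi$ already lies in $M_W$. Thus ``$M_W\models Z$ is a $\sigma$-complete ultrafilter'' is equivalent to the same statement in $V$ (and likewise for the $U_\xi$), which is exactly what makes Definition~\ref{Def: Ketonen order}(1) and the $M_W$-side condition line up. The usual minor caveat about partial versus total representing functions is harmless here, since every index set in play belongs to $W$.
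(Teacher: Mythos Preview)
Your proof is correct and takes essentially the same approach as the paper's: both directions of both parts unwind the definitions via representing sequences and \L o\'s, the only cosmetic difference being that in the forward direction of part~(1) the paper defines $Z$ directly in $M_W$ as $\{X\subseteq\delta : \delta\in j_W(f)(X)\}$ rather than via your sequence $\langle Z_\xi\rangle$ (these are the same object by \L o\'s). Your final paragraph on absoluteness of ``$\sigma$-complete ultrafilter'' is harmless but unnecessary---\L o\'s alone already gives that $M_W\models Z$ is a $\sigma$-complete ultrafilter iff $W$-many $U_\xi$ are $\sigma$-complete ultrafilters in $V$, which is exactly what Definition~\ref{Def: Ketonen order}(1) asks for.
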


\begin{proof}
	$\ $
	\begin{enumerate}
		\item Assume $U<_L W$. Let $f\colon \mathcal{P}(\kappa)\to \mathcal{P}(\kappa)$ be a super-Lipschitz function such that $f^{-1}[U] = W$.  In $M_W$, define $Z = \{  X\subseteq \left[Id\right]_W \colon  [Id]_W \in j_W(f)(X) \}$. The fact that $j_W(f)$ is super-Lipschitz implies that $Z$ concentrates on $[Id]_W$. Note that for every $A\subseteq \kappa$,
		\begin{align*}
			A\in U \leftrightarrow & f(A)\in W \\
			\leftrightarrow & [Id]_W\in j_W(f)(j_W(A))\\ \leftrightarrow & \left[Id\right]_W\in j_W(f)(j_W(A)\cap [Id]_W) \\
			\leftrightarrow & j_W(A)\cap[Id]_W\in Z.
		\end{align*}
		For the other direction, assume that $Z\in M_W$ concentrates on $[Id]_W$, and for every $A\subseteq \kappa$, $A\in U \leftrightarrow j_W(A)\cap [Id]_W\in Z$. Let $g\colon \kappa\to \mathcal{\kappa}$ be a function such that $[g]_W = Z$. Since $Z$ concentrates on $[Id]_W$, we can assume that for every $\alpha<\kappa$, $g(\alpha)$ concentrates on  $\alpha$. Define $f\colon \mathcal{P}(\kappa)\to \mathcal{P}(\kappa)$ by setting, for each $A\subseteq \kappa$, $f(A) = \{ \alpha<\kappa \colon A\cap \alpha\in g(\alpha)  \}$. Then $f$ is super-Lipschitz, and for every $A\subseteq \kappa$, 
		\begin{align*}
			A\in U \leftrightarrow j_W(A)\cap [Id]_W\in Z \leftrightarrow \{ \alpha<\kappa \colon A\cap \alpha\in g(\alpha) \} \in W \leftrightarrow f(A)\in W.
		\end{align*}
		\item Assume that $U<_k W$, and let $I\in W$ and $\la U_i \colon i\in I \ra$ witness this, each $U_i$ concentrates on $i$. Let $Z = [i\mapsto U_i]_W$. Then $Z\in M_W$ is a $\sigma$-complete ultrafilter that concentrates on $[Id]_W$, and, given $A\subseteq \kappa$,
		
		$$ A\in U \leftrightarrow \{ i\in I \colon A\cap i \in U_i  \}\in W \leftrightarrow j_W(A)\cap \delta\in Z. $$
		
		For the other direction, just take $\la U_i \colon i<\kappa \ra$ to be a sequence of $\sigma$-complete ultrafilters representing $Z$, in the sense that $[i\mapsto U_i]_W = Z$, and take $I\in W$ to be a set such that for each $i\in I$, $U_i$ concentrates on $i$.

	\end{enumerate}
\end{proof}

A simple corollary of Proposition \ref{Proposition: equivalent characterization of Lipschitz and Ketonen} is that, when restricted to normal measures, the Lipschitz and Ketonen orders coincide with the Mitchell order\footnote{Recall that for normal measures $U,W$, we say that $U$ is Mitchell below $W$ and denote $U\vartriangleleft W$, if $U\in M_W$}:

\begin{corollary}\label{Corollary: Ketonen and Lipschitz are Mitchell on normal measures}
	Assume that $U,W$ are normal measures on $\kappa$. Then $U\vartriangleleft W \leftrightarrow U<_k W \leftrightarrow U<_L W$.
\end{corollary}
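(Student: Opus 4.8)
The plan is to close the cycle of implications
$U \vartriangleleft W \Rightarrow U <_k W \Rightarrow U <_L W \Rightarrow U \vartriangleleft W$.
The middle implication is immediate from Proposition \ref{Lemma: Ketonen implies Lipschitz}, which already applies to arbitrary $\sigma$-complete ultrafilters. For the other two, I would invoke the characterizations of Proposition \ref{Proposition: equivalent characterization of Lipschitz and Ketonen}, together with two standard facts about the normal measure $W$ on $\kappa$: first, $[Id]_W = \kappa$, and second, $j_W(A)\cap\kappa = A$ for every $A\subseteq\kappa$ (since $j_W$ fixes ordinals below $\kappa$). I would also use that $M_W$ is closed under $\kappa$-sequences, so that $\mathcal{P}(\kappa)^{M_W} = \mathcal{P}(\kappa)^V$.

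For $U \vartriangleleft W \Rightarrow U <_k W$: since $U\in M_W$, set $Z = U$ as computed inside $M_W$. Then $Z$ is a $\sigma$-complete ultrafilter of $M_W$; it concentrates on $\kappa = [Id]_W$ vacuously, because any two subsets of $\kappa$ that agree on $\kappa$ are equal; and, using $j_W(A)\cap\kappa = A$, we get $A\in U \iff A\in Z \iff j_W(A)\cap[Id]_W\in Z$ for every $A\subseteq\kappa$. Proposition \ref{Proposition: equivalent characterization of Lipschitz and Ketonen}(2) then yields $U <_k W$.

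For $U <_L W \Rightarrow U \vartriangleleft W$: Proposition \ref{Proposition: equivalent characterization of Lipschitz and Ketonen}(1) furnishes a set $Z\in M_W$ concentrating on $[Id]_W = \kappa$ such that $A\in U \iff j_W(A)\cap\kappa\in Z$ for every $A\subseteq\kappa$. Again $j_W(A)\cap\kappa = A$, so $A\in U \iff A\in Z$ for every $A\in\mathcal{P}(\kappa)^V = \mathcal{P}(\kappa)^{M_W}$; since $Z$ is (in $M_W$) a collection of subsets of $\kappa$, this says precisely that $Z = U$. Hence $U\in M_W$, i.e. $U \vartriangleleft W$.

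I do not expect a genuine obstacle here; the one point requiring care is the final step, where one must ensure that the set $Z$ produced abstractly by Proposition \ref{Proposition: equivalent characterization of Lipschitz and Ketonen} is literally equal to $U$, rather than merely agreeing with it on some subalgebra of $\mathcal{P}(\kappa)$ — and this is exactly where the closure $\mathcal{P}(\kappa)^{M_W} = \mathcal{P}(\kappa)^V$ enters.
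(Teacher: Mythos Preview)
Your proposal is correct and follows essentially the same route as the paper: close the cycle via Proposition~\ref{Lemma: Ketonen implies Lipschitz} for the middle implication, and use Proposition~\ref{Proposition: equivalent characterization of Lipschitz and Ketonen} together with $[Id]_W=\kappa$ and $j_W(A)\cap\kappa=A$ for the two ends. The only cosmetic difference is that the paper phrases the last step as $U=Z\cap\mathcal{P}(\kappa)$ rather than $Z=U$, but this amounts to the same observation once one uses $\mathcal{P}(\kappa)^{M_W}=\mathcal{P}(\kappa)^V$, which you made explicit.
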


\begin{proof}
	It suffices to prove that $U\vartriangleleft W \to U<_k W$ and $U<_L W \to U\vartriangleleft W$. For the former, note that $U\in M_W$ is a $\sigma$-complete ultrafilter that concentrates on $\kappa = [Id]_W$, and for every $A\subseteq \kappa$, $j_W(A)\cap \kappa = A$, so $U<_k W$. Thus, let us concentrate on the proof that $U <_L W$ implies $U\vartriangleleft W$. By Proposition \ref{Proposition: equivalent characterization of Lipschitz and Ketonen}, we can assume that for some $Z\in M_W$ that concentrates $\kappa = [Id]_W$, and for every $A\subseteq \kappa$, $A\in U \leftrightarrow j_W(A)\cap \kappa\in Z$. In particular, $U = Z\cap \mathcal{P}(\kappa)$. It follows that $U\in M_W$. 
\end{proof}

\section{Separating the Ketonen and Lipschitz orders}

Assume GCH. Let $\kappa$ be a measurable cardinal, and denote 
$$I = \{ \alpha<\kappa \colon \alpha \text{ is inaccessible} \}.$$
Let $\la \po_\alpha, \dot{\qo}_\alpha \colon \alpha<\kappa \ra$ be an Easton support product in which, for each $\alpha\in I$, $\qo_\alpha = \{  0_{\qo_\alpha}, 0,1 \}$, where $0, 1$ are incompatible elements (we simply denote each forcing $\qo_\alpha$ by $\qo$). For every $\alpha\in \kappa\setminus I$, $\qo_\alpha$ is the trivial forcing. In other words,
$$ \po = \{ f\colon X\to 2 \colon X\subseteq I \text{ is an Easton set} \}$$
ordered by inclusion. Here, by an “Easton set”, we mean a set $X\subseteq \kappa$ such that, for every $\lambda\in I\cup \{\kappa\}$, $X\cap \lambda$ is bounded in $\lambda$.

Let $G\subseteq \po$ be generic over $V$. The following is standard:

\begin{claim}
	$\kappa$ remains measurable in $V[G]$. Furthermore, whenever $U\in V$ is a normal measure on $\kappa$,
	\begin{enumerate}
		\item $j_U(\po)$ factors to the form $\po\times \qo \times j_U(\po)\setminus (\kappa+1)$, where $j_U(\po)\setminus (\kappa+1) = \{  f\colon X\to 2 \colon X\subseteq j_U(I)\setminus (\kappa+1) \text{ is an Easton set} \}$.
		\item There exists a set $G^*\in V[G]$ such that, for every $i\in \{0,1\}$, $H_i =  G\times \{i\}\times G^* $ is $j_U(\po)$-generic set over $M_U$.
		\item There are normal measures $U_0, U_1\in V[G]$ on $\kappa$ with corresponding ultrapower embedding $j^{V[G]}_{U_i}\colon V[G]\to M_U[H_i]$ mapping $G$ to $H_i$.
	\end{enumerate}
\end{claim}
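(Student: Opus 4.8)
The plan is to prove this by a standard lifting argument, noting that the forcing $\po$ is a small (size $\kappa$), $\kappa$-c.c. (indeed essentially cofinally-closed-looking but really just an Easton product), so that $\kappa$'s measurability is easily preserved, and the main content is arranging the three items simultaneously. First I would analyze $j_U(\po)$ for a fixed normal measure $U\in V$. By elementarity, $j_U(\po)$ is (in $M_U$) the Easton support product of the forcings $\qo_\alpha$ for $\alpha\in j_U(I)$. Since $\kappa$ is inaccessible in $M_U$ and $j_U(I)\cap \kappa = I$, the product splits as a three-part product: the coordinates below $\kappa$ give exactly $\po$ (using that $M_U$ and $V$ agree on $H_\kappa$ and on which sets are Easton below $\kappa$), the single coordinate at $\kappa$ gives $\qo$, and the coordinates in $j_U(I)\setminus(\kappa+1)$ give the tail $j_U(\po)\setminus(\kappa+1)$. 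This establishes item (1). One has to check the product structure is genuine, i.e. that an Easton subset of $j_U(I)$ restricted below $\kappa$ is an Easton subset of $I$ and that its part above $\kappa$ is Easton in the tail sense — this is routine from the definition of Easton set and the inaccessibility of $\kappa$ in $M_U$.

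Next I would build the tail generic $G^*$. The tail forcing $j_U(\po)\setminus(\kappa+1)$ is, in $M_U$, $\leq\kappa$-closed-ish — more precisely, it has the property that its conditions with support bounded below its first nontrivial coordinate above $\kappa$ form a $\kappa^+$-directed-closed piece; the key point is that $M_U$ thinks this tail forcing is ${<}j_U(\kappa)$-directed-closed with first nontrivial coordinate the least inaccessible of $M_U$ above $\kappa$, which is ${>}\kappa$. In $V[G]$, we have $2^\kappa = \kappa^+$ (GCH is preserved by this small forcing), and $M_U[G]$ remains closed under $\kappa$-sequences from $V[G]$ (standard: $M_U^\kappa\subseteq M_U$ in $V$, and $\po$ is $\kappa$-c.c. of size $\kappa$). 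Enumerate the dense subsets of the tail forcing lying in $M_U[G]$ in a sequence of length $\kappa^+$; wait — there are $j_U(\kappa)$-many such, so one instead works inside $M_U[G]$: since the tail is sufficiently closed in $M_U$, and $M_U[G]$ is still sufficiently closed, one meets all the $M_U[G]$-dense sets by a diagonal construction of length the appropriate cardinal. The cleanest route: the tail forcing is ${<}\delta$-directed-closed in $M_U$ where $\delta$ is the least inaccessible above $\kappa$ in $M_U$, and $|j_U(\po)\setminus(\kappa+1)|^{M_U}$-many dense sets can be enumerated and met because $M_U[G]$ has enough closure (at least $\kappa^+$) while the relevant antichains have size at most... — here I would actually appeal to the well-known fact (Cummings's handbook chapter) that when the quotient forcing is ${<}\kappa^+$-closed in the intermediate model and the number of dense sets to meet is $\kappa^+$, one builds the generic in $V[G]$. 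So $G^*\in V[G]$ is obtained, and then for each $i\in\{0,1\}$, $H_i = G\times\{i\}\times G^*$ is $j_U(\po)$-generic over $M_U$: genericity of the product follows from genericity of each factor over the appropriate model plus mutual genericity (here the factors are over $M_U$, $M_U[G]$, etc., and the product lemma applies).

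For item (3): I need to lift $j_U\colon V\to M_U$ to $j^{V[G]}_{U_i}\colon V[G]\to M_U[H_i]$ with $j_{U_i}(G)=H_i$. The lifting criterion requires that $j_U[G]\subseteq H_i$, i.e. that every condition in $G$, as an element of $j_U(\po)$ via $j_U$, lies in $H_i$. Since a condition $p\in G$ has support a bounded subset of $I$, $j_U(p)=p$ (pointwise fixed, as $j_U$ fixes ordinals below $\kappa$ and $p$ has bounded support), and $p\in G$ which is the first factor of $H_i$ — so $j_U[G]\subseteq H_i$ holds. Therefore $j_U$ lifts to $j_i\colon V[G]\to M_U[H_i]$ defined by $j_i(\tau_G)=j_U(\tau)_{H_i}$; this is a well-defined elementary embedding with critical point $\kappa$. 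Then $U_i = \{X\subseteq\kappa : \kappa\in j_i(X)\}$ is a normal measure on $\kappa$ in $V[G]$, its ultrapower embedding $j^{V[G]}_{U_i}$ factors $j_i$, and since $M_U[H_i]$ is generated by $\kappa$ and ordinals via $j_i$ (because $M_U$ is so generated via $j_U$ and $H_i$ is generic), the factor map is the identity, so $j^{V[G]}_{U_i} = j_i$ and $M_{U_i}^{V[G]} = M_U[H_i]$. Finally $j^{V[G]}_{U_i}(G) = j_i(G) = H_i$ by construction. I expect the main obstacle to be the careful bookkeeping in constructing $G^*$ inside $V[G]$ and verifying that $H_i$ is genuinely generic over $M_U$ for the full product $j_U(\po)$ (as opposed to over the various intermediate models); this requires invoking the product/iteration genericity lemmas in the right order and checking the closure of $M_U[G]$ in $V[G]$ suffices to run the diagonalization. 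The measurability preservation and the lifting itself are entirely routine.
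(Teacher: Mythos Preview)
Your proposal is correct and follows essentially the same approach as the paper: factor $j_U(\po)$ at $\kappa$, build the tail generic $G^*\in V[G]$ by diagonalizing through the $\kappa^+$-many maximal antichains of the $\leq\kappa$-closed tail forcing that lie in $M_U[G]$, and then lift $j_U$ in the standard way. The paper's own argument is in fact much terser than yours---it simply declares the claim standard and sketches only the construction of $G^*$ in one sentence---so your level of detail, including the verification that $j_U(p)=p$ for $p\in G$ and that the lifted embedding coincides with the ultrapower by the derived measure, goes beyond what the paper provides; your brief wobble over the exact closure degree of the tail forcing settles on the correct statement (closed below the least $M_U$-inaccessible above $\kappa$, hence $\leq\kappa$-closed in $V$).
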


$G^*$ as above is easily constructed in $V[G]$ by enumerating all the antichains of $j_U(\po)\setminus (\kappa+1)$ (belonging to $M_U[G]$, or, equivalently, $V[G]$) and constructing an ascending $\kappa^+$-sequence of conditions that meets them all. Since $G^*$ is generic over $M_U[G]$, it's also generic over $M_U[G\times \{i \}]$ for every $i\in \{0,1\}$. Thus each $H_i$ is generic over $M_U$ for $j_U(\po)$.

We are now ready to present one of the main results of this paper, which is a separation between the Ketonen order and the Lipschitz order in $V[G]$. We will make use of the following standard claim:

\begin{claim}\label{Claim: HayutPovedaIntersectionOfImagesOfClubs}
	Let $m\geq 1$ and $U$ a normal measure on a measurable cardinal $\kappa$. Then:
	$$\bigcap\{ j_{U^m}(C) \colon C\in V \mbox{ is a club in }\kappa \}= \{ j_{U^n}(\kappa) \colon n< m \}.$$
\end{claim}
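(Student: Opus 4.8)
The plan is to realize $j_{U^m}$ as the $m$-step iterated ultrapower $j_{0,m}\colon V\to M_m$ of the normal measure $U$; for a single measure the $m$-fold Fubini power and the $m$-step iteration determine the same embedding, so this is legitimate. Set $\kappa_i=j_{0,i}(\kappa)$ for $i\le m$, so that $\kappa_0<\kappa_1<\dots<\kappa_m$ and $j_{U^n}(\kappa)=\kappa_n$ for every $n\le m$; in particular the right-hand side of the claim is $\{\kappa_0,\dots,\kappa_{m-1}\}$. Two standard facts will do the work. First, for each $n\le m$, $j_{0,m}$ factors as $j_{n,m}\circ j_{0,n}$, where the tail map $j_{n,m}\colon M_n\to M_m$ is the identity on ordinals below $\kappa_n$; consequently $j_{n,m}(A)\cap\kappa_n=A$ for every $A\subseteq\kappa_n$ with $A\in M_n$. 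Second, for each $n<m$, every ordinal below $\kappa_{n+1}=j_{0,n+1}(\kappa)$ belongs to $M_{n+1}$ and is therefore of the form $j_{0,n+1}(f)(\kappa_0,\dots,\kappa_n)$ for some $f\colon[\kappa]^{n+1}\to\kappa$ in $V$ --- these being the generators of the iteration. Note also that $C\subseteq\kappa$ forces $j_{0,m}(C)\subseteq\kappa_m$, so every member of the intersection in the claim is an ordinal $<\kappa_m$.

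First I would check that each $\kappa_n$ with $n<m$ lies in $j_{0,m}(C)$ for every club $C\subseteq\kappa$. Using the first fact, $j_{0,m}(C)=j_{n,m}(j_{0,n}(C))$, and since $j_{0,n}(C)\subseteq j_{0,n}(\kappa)=\kappa_n$ we get $j_{0,m}(C)\cap\kappa_n=j_{0,n}(C)$. Now $j_{0,n}(C)$ is a club in $\kappa_n$ in $M_n$, hence cofinal in $\kappa_n$, so $\kappa_n=\sup\!\bigl(j_{0,m}(C)\cap\kappa_n\bigr)$; since $\kappa_n<\kappa_m$ and $j_{0,m}(C)$ is closed, $\kappa_n\in j_{0,m}(C)$. (The case $n=0$ is just the usual fact that $\kappa\in j_{0,m}(C)$ for $C$ club.)

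Next I would show that every $\beta$ in the intersection equals $\kappa_n$ for some $n<m$. Such a $\beta$ satisfies $\beta<\kappa_m$; suppose for contradiction that $\beta\notin\{\kappa_0,\dots,\kappa_{m-1}\}$. If $\beta<\kappa$, then $C=(\beta,\kappa)$ is a club in $\kappa$ with $j_{0,m}(C)=(\beta,\kappa_m)$, which omits $\beta$, a contradiction. Otherwise $\beta\ge\kappa$, and I let $n<m$ be largest with $\kappa_n\le\beta$, so that $\kappa_n<\beta<\kappa_{n+1}$. By the second fact, $\beta=j_{0,n+1}(f)(\kappa_0,\dots,\kappa_n)$ for some $f\colon[\kappa]^{n+1}\to\kappa$ in $V$; let $C$ be the set of closure points of $f$, i.e. $C=\{\gamma<\kappa\colon\gamma\text{ is a limit ordinal and }f(\vec\alpha)<\gamma\text{ for all }\vec\alpha\in[\gamma]^{n+1}\}$, a club in $\kappa$. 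By elementarity $j_{0,n+1}(C)$ is given by the same formula with $\kappa$ and $f$ replaced by $\kappa_{n+1}$ and $j_{0,n+1}(f)$; since $\kappa_0<\dots<\kappa_n<\beta$, the tuple $\vec\alpha=(\kappa_0,\dots,\kappa_n)$ lies in $[\beta]^{n+1}$, and $j_{0,n+1}(f)(\vec\alpha)=\beta\not<\beta$, so $\beta\notin j_{0,n+1}(C)$. Finally, applying the first fact at level $n+1$, $j_{0,m}(C)=j_{n+1,m}(j_{0,n+1}(C))$ with $j_{0,n+1}(C)\subseteq\kappa_{n+1}$, so $j_{0,m}(C)\cap\kappa_{n+1}=j_{0,n+1}(C)$; as $\beta<\kappa_{n+1}$, this gives $\beta\notin j_{0,m}(C)$, contradicting the choice of $\beta$. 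Hence $\beta\in\{\kappa_0,\dots,\kappa_{m-1}\}$, which together with the previous paragraph yields the claimed equality.

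The main thing to be careful about is the bookkeeping in the last step: one must invoke the generator fact at exactly the level $n+1$ singled out by $\beta$, using only the generators $\kappa_0,\dots,\kappa_n$ that lie below $\beta$, so that $(\kappa_0,\dots,\kappa_n)$ is a legitimate element of $[\beta]^{n+1}$ and hence witnesses the failure of the closure condition defining $j_{0,n+1}(C)$; and one must use that the tail maps $j_{n,m}$ fix ordinals below $\kappa_n$, which is what lets a club separating $\beta$ at level $n+1$ keep separating it under the full $j_{0,m}$. The remaining points --- that sets of closure points of a function $[\kappa]^{<\omega}\to\kappa$ form clubs, the use of elementarity, and the absoluteness of ``closed'' --- are routine.
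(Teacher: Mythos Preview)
Your proof is correct and follows a genuinely different route from the paper for the harder ($\subseteq$) direction. You represent $\beta\in(\kappa_n,\kappa_{n+1})$ as $j_{0,n+1}(f)(\kappa_0,\dots,\kappa_n)$ and then exhibit a \emph{specific} club $C$ that omits $\beta$, namely the set of closure points of $f$; a single application of elementarity shows $\beta\notin j_{0,n+1}(C)$, and the tail-map argument pushes this up to $j_{0,m}$. The paper instead argues indirectly: it shows that $\alpha\in\bigcap_C j_{U^m}(C)$ forces $\{\vec\xi:h(\vec\xi)\in C\}\in U^n$ for \emph{every} club $C$, hence $\mathrm{Im}(h)$ is stationary, and then derives a contradiction by producing a regressive function on $\mathrm{Im}(h)$ with only bounded fibers, violating Fodor's lemma. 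Your approach is more elementary (no pressing-down needed) and constructive; the paper's approach highlights the stationarity of the range of the representing function, which is of independent interest. Both rely on the same representation fact and the same observation that $j_{0,m}(C)\cap\kappa_{n+1}=j_{0,n+1}(C)$, so the essential ingredients coincide even though the endgames differ. (Incidentally, your separate treatment of $\beta<\kappa$ is harmless but unnecessary: if $\beta<\kappa$ then $\beta\in j_{0,m}(C)\cap\kappa=C$ for every club $C$, which is impossible.)
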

The proof can be found, for instance, in \cite[Lemma 5.3]{HayutPoveda2022gluing}. For the sake of completeness, we will present the proof of Claim \ref{Claim: HayutPovedaIntersectionOfImagesOfClubs} below, but we defer it until after the proof of Theorem \ref{Theorem: Separating Lipschitz from Ketonen the Easy way}.

\begin{theorem}\label{Theorem: Separating Lipschitz from Ketonen the Easy way}
	Assume $V= L[U]$, $\kappa$ is the unique measurable cardinal, $\po$ is the forcing notion described above and $G,G^*, U_0,U_1$ are as above. Denote $\vc = U_0$ and $\wc$ the measure derived from $j^{V[G]}_{(U_1)^2}$ using the ordinal $j_U(\kappa)+\kappa$ as a seed. Then $\vc<_{L} \wc$ but $\vc, \wc$ are Ketonen incomparable.
\end{theorem}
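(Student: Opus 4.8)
The plan is to run everything through the ultrapower characterizations of Proposition~\ref{Proposition: equivalent characterization of Lipschitz and Ketonen}, after first describing $M_\wc$ and $j_\wc$ explicitly.

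\textbf{Normalizing $\wc$.} Since $\kappa\le j_U(\kappa)+\kappa<j^{V[G]}_{(U_1)^2}(\kappa)$, the ordinal $j_U(\kappa)+\kappa$ is a legitimate seed, so $\wc$ is a $\kappa$-complete ultrafilter on $\kappa$. Inside $\mathrm{Ult}(V[G],(U_1)^2)$ the ordinal $j_U(\kappa)$ is the largest additively indecomposable ordinal $\le j_U(\kappa)+\kappa$ and $\kappa$ is the corresponding remainder; since $(\kappa,j_U(\kappa))$ are the canonical generators of the two-step iterated ultrapower, the single seed $j_U(\kappa)+\kappa$ generates all of $\mathrm{Ult}(V[G],(U_1)^2)$. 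Hence $M_\wc=\mathrm{Ult}(V[G],(U_1)^2)$, $j_\wc=j^{V[G]}_{(U_1)^2}$, and $[\mathrm{Id}]_\wc=j_U(\kappa)+\kappa$. I would also record: $\mathrm{crit}(j_\wc)=\kappa$, so $j_\wc(A)\cap\kappa=A$ for $A\subseteq\kappa$; the generic coding set $s_G:=\{\alpha\in I:G(\alpha)=1\}$ satisfies $\kappa\in j_1(s_G)$ but $\kappa\notin j_0(s_G)$ (since $H_i$ carries bit $i$ at coordinate $\kappa$), so $s_G\in U_1\setminus U_0$; and, most importantly, $M_{U_0}=M_U[H_0]=M_U[H_1]=M_{U_1}$, because $\{0\}$ and $\{1\}$ are trivial to add over $M_U[G][G^*]$. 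In particular $U_0\notin M_{U_1}$ and $U_1\notin M_{U_0}$ (a measure is never in its own ultrapower), i.e.\ $U_0$ and $U_1$ are Mitchell-incomparable.

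\textbf{Ketonen-incomparability.} For $\wc\not<_k\vc$: by Proposition~\ref{Proposition: equivalent characterization of Lipschitz and Ketonen}(2) together with $j_\vc(A)\cap\kappa=A$, $\wc<_k\vc$ would force $\wc=Z\cap\mathcal{P}(\kappa)^{V[G]}\in M_U[H_0]$; I would refute this by showing $M_U[H_0]$ cannot compute $\wc$ — e.g.\ the set $D_{s_G}:=\{\beta+\alpha:\beta\in s_G,\ \alpha,\beta\text{ additively indecomposable},\ \alpha<\beta\}$ lies in $\wc$ but not in $U_0$ (it traces back to $s_G\notin U_0$), and $M_U[H_0]$ "does not see" the coordinate-$\kappa$ bit of $H_1$ that is encoded here; Claim~\ref{Claim: HayutPovedaIntersectionOfImagesOfClubs} and the homogeneity of $\qo$ make this precise. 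For $\vc\not<_k\wc$: suppose $Z\in M_\wc$ is a $\sigma$-complete nonprincipal ultrafilter concentrating on $\delta:=j_U(\kappa)+\kappa$ with $A\in\vc\iff j_\wc(A)\cap\delta\in Z$. Split on where $Z$ concentrates. If $\kappa\in Z$, then from $j_\wc(A)\cap\kappa=A$ one gets $\vc=Z\cap\mathcal{P}(\kappa)^{V[G]}\in M_\wc\subseteq M_{U_1}=M_{U_0}$, i.e.\ $U_0\in M_{U_0}$ — absurd. The remaining cases ($Z$ concentrating on $[\kappa,j_U(\kappa))$ or on $[j_U(\kappa),\delta)$) are handled by transporting $Z$ to an ultrafilter on $\kappa$ and unwinding the two applications of {\L}o\'s's theorem for $j_\wc$; in each case one again contradicts the Mitchell-incomparability of $U_0,U_1$ (and the Rudin–Keisler rigidity of the normal measure $U_0$). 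I expect this to be the heart of the matter: a witness for the Lipschitz relation is allowed to be an arbitrary set concentrating on $\delta$, whereas a Ketonen witness must be an \emph{ultrafilter}, and it is precisely the ultrafilter requirement that fails.

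\textbf{The Lipschitz relation $\vc<_L\wc$.} By Proposition~\ref{Proposition: equivalent characterization of Lipschitz and Ketonen}(1) it suffices to produce \emph{some} $Z\in M_\wc$ concentrating on $\delta=j_U(\kappa)+\kappa$ with $A\in\vc\iff j_\wc(A)\cap\delta\in Z$ for every $A\subseteq\kappa$. Unwinding the iterated ultrapower, $j_\wc(A)\cap\delta$ records $A$ (as its trace below $\kappa$), the set $j_1(A)$ (as its trace below $j_U(\kappa)$), and the set $\{\xi<\kappa:\{\eta<\kappa:\eta+\xi\in A\}\in U_1\}$ (as its trace in $[j_U(\kappa),\delta)$); the choice of seed is designed exactly so that $M_\wc$ can read all of this off. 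The remaining task is to define, inside $M_\wc=\mathrm{Ult}(V[G],(U_1)^2)$, a subset $Z$ of $\mathcal{P}(\delta)^{M_\wc}$ whose restriction to the sets of the form $j_\wc(A)\cap\delta$ computes $[A\in U_0]$ — exploiting the interaction, built into the construction, between $s_G$ and the discrepancy $U_1\setminus U_0$, together with the fact that $j_\wc$ does have access to $j_U(U)$ and to $k_1(j_1(U_1))$. I expect this explicit construction of $Z$, and the verification that it genuinely lies in $M_\wc$, to be the main obstacle of the proof; it is also the step where the gap between Lipschitz-reducibility and Ketonen-comparability is actually produced.
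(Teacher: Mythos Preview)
Your proposal has a genuine gap precisely where you yourself flag the ``main obstacle'': you never construct the witness $Z$ for $\vc<_L\wc$. You correctly observe that $j_\wc(A)\cap\delta$ encodes $A$ and $j_{U_1}(A)$, but you give no mechanism by which $M_\wc$ can pass from $j_{U_1}(A)$ to the bit ``$\kappa\in j_{U_0}(A)$''. The paper's key idea, which you are missing, is the following name-recovery trick: fix the canonical well-order $\vartriangleleft$ of $H_{\kappa^+}$ in $L[U]$; for $Y\subseteq\kappa_1$ in $M_\wc$, let $\vec{A}_Y$ be the $j_{U^2}(\vartriangleleft)$-least sequence of antichains (a nice name) with $Y=\mathrm{val}(\vec{A}_Y,G\times\{1\}\times G^*)$; then set $Z=\{Y:\kappa\in\mathrm{val}(\vec{A}_Y,G\times\{0\}\times G^*)\}$. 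The point is that when $Y=j_\wc(X)\cap\kappa_1=j_{U_1}(X)$, the minimality forces $\vec{A}_Y=j_U(\vec{A})$ where $\vec{A}$ is the $\vartriangleleft$-least nice name for $X$, and then $\mathrm{val}(j_U(\vec{A}),G\times\{0\}\times G^*)=j_{U_0}(X)$. So $Z$ literally flips the generic bit at coordinate $\kappa$. Nothing in your sketch approximates this; your appeal to ``access to $j_U(U)$'' is in fact wrong, since $j_U(U)\notin M_{U^2}$.

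On the Ketonen side, two remarks. First, $\wc\not<_k\vc$ is free once $\vc<_L\wc$ is established: $\wc<_k\vc$ would give $\wc<_L\vc$, contradicting strictness of $<_L$. Second, your case-split on where $Z$ concentrates can be made to work (the crucial observation you omit is that $M_\wc$ has no measurable below $j_{U^2}(\kappa)$, so any $\sigma$-complete $Z$ on $\delta$ is principal, reducing to a seed analysis), but the paper takes a cleaner route via the embedding clause of Definition~\ref{Def: Ketonen order}: the composite $k\circ j_\vc\upharpoonright L[U]$ must be a finite iterate $j_{U^m}$, Claim~\ref{Claim: HayutPovedaIntersectionOfImagesOfClubs} pins $k(\kappa)\in\{\kappa,\kappa_1\}$, and then the two routes disagree on the value of the generic function at $k(\kappa)$ (bit $0$ versus bit $1$). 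This is more robust than an ad hoc RK/Mitchell analysis and is what carries over verbatim to the Weak~UA model in Section~5.
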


\begin{proof}
	We first argue that $\vc<_L \wc$, by using the equivalent characterization of the Lipschitz order given in Proposition \ref{Proposition: equivalent characterization of Lipschitz and Ketonen}. For an intuitive explanation of this argument, see the first few paragraphs following the proof of Theorem \ref{Theorem: Separating Lipschitz from Ketonen the Easy way}. 
	
	Fix in advance a well order $\vartriangleleft$ on $H_{\kappa^+}$. Denote by $\mathcal{A}$ the set of antichains in $\po$. Note that $\po$ is $\kappa-c.c.$, so $\mathcal{A}\subseteq V_\kappa$.
	
	We would like to define a set $Z\in \text{Ult}(V[G], \wc)$ concentrating on $j_U(\kappa) \leq [Id]_\wc$, such that, for every $X\subseteq \kappa$, $X\in \vc$ if and only if $j_{\wc}(X)\cap j_U(\kappa)\in Z$. For that, we need to establish some notations. 
	
	 Given a $\po$-name $\dot{X}$ for a subset of $\kappa$, recall that it has an associated “nice name” of the form $\bigcup_{i<\kappa} \left( \{ \check{i} \}\times A_{i} \right)$; this means that, for each $i<\kappa$, $A_i\in \mathcal{A}$ is a maximal antichain in $\{ q\in \po \colon q\Vdash \check{i}\in \dot{X} \}$ (we allow $A_i = \emptyset$). In particular, $(\dot{X})_G$ can be retrieved from $\vec{A} = \la A_i \colon i<\kappa \ra$ and $G$, since $(\dot{X})_G = \text{val}\left( \vec{A}, G\right)$, where
	$$ \text{val}\left( \vec{A}, G \right) = \{  i< \text{lh}(\vec{A}) = \kappa \colon G\cap A_{i}\neq \emptyset \}. $$

	The above shows that for every $X\subseteq \kappa$ in $V[G]$ there is $\vec{A}\in {\mathcal{A}}^\kappa$ (in $V$) such that $X = \text{val}(\vec{A}, G)$.

	Note that $\text{Ult}(V[G], \wc) = \text{Ult}(V[G], (U_1)^2)$ has the form $M_{U^2}[G\times \{1\}\times G^*\times \{1\}\times G^{**}]$ where $G^{**}\in M_U[G\times \{1\}\times G^*]$ is $j_{U^2}(\po)\setminus (j_U(\kappa)+1)$-generic over $M_{U^2}[G\times \{1\}\times G^*]$. 
	
	Denote $\kappa_1 = j_U(\kappa)$. Fix $Y\subseteq \kappa_1$ in $M_{U^2}[G\times \{1\}\times G^*\times \{1\}\times G^{**}]$. Since $j_{U^2}(\po)\setminus \kappa_1$ is sufficiently closed,  $Y\in M_{U^2}[G\times \{1\}\times G^*]$. Thus, there exists $\vec{A}\in \left( j_{U^2}(\mathcal{A})\right)^{\kappa_1}$ such that 
	$$M_{U^2}[G\times \{1\}\times G^*]\vDash Y = \text{val}(\vec{A}, G\times \{1\}\times G^*).$$
	Denote the $j_{U^2}(\vartriangleleft)$-least such $\vec{A}\in M_{U^2}$ by $\vec{A}_Y$.
	
	In $\text{Ult}(V[G], \wc) = M_{U^2}[G\times \{1\}\times G^*\times \{1\}\times G^{**}]$, define:
	\begin{align*}
		Z &= \{  Y\subseteq \kappa_1 \colon \kappa\in \text{val}(\vec{A}_Y, G\times \{ 0 \} \times G^* ) \}\\
		& = \{  Y\subseteq \kappa_1 \colon (G\times \{0\}\times G^*) \cap \vec{A}_Y(\kappa) \neq \emptyset \}.
	\end{align*}
	%
	Clearly $Z$ concentrates on $\kappa_1$. Thus, in order to establish that $\vc<_L \wc$, it suffices to prove the following:
	\begin{claim}
		For every $X\subseteq \kappa$ in $V[G]$, $ X\in \vc \iff  j_\wc(X)\cap \kappa_1\in Z .$
		
	\end{claim}

	\begin{proof}
		Fix such $X\in V[G]$. In $V[G]$, let $\vec{A} = \la A_i \colon i<\kappa \ra\in V$ be the $\vartriangleleft$-least sequence in $(\mathcal{A})^\kappa$ such that $X = \text{val}(\vec{A}, G)$.
		
		Denote $Y = j_{\wc}(X)\cap \kappa_1$. It suffices to prove that, in the above notations, $\vec{A}_Y = j_U(\vec{A})$. Indeed, this implies:
		\begin{align*}
			X\in \vc & \iff  \kappa\in j_\vc(X) \\
			& \iff \kappa\in \text{val}\left( j_{U}(\vec{A}), G\times \{ 0 \}\times G^* \right) \\
			& \iff \kappa\in \text{val}\left( \vec{A}_Y, G\times \{ 0 \}\times G^* \right) \\
			& \iff Y\in Z.
		\end{align*}
		As desired.
		
		Thus, we proceed to prove that $\vec{A}_Y = j_U(\vec{A})$. First, recall that $ Y = j_{\wc}(X)\cap \kappa_1$, and, since $\wc \equiv_{RK} (U_1)^2$, $Y = j_{U_1}(X)$. Therefore, by elementarity,
		$$M_U[G\times\{1\}\times G^*] \vDash Y = \text{val}\left( j_U(\vec{A}), G\times\{1\}\times G^* \right). $$
		Since $j_U(\vec{A}) = j_{U^2}(\vec{A})\uhr \kappa_1\in M_{U^2}$ and the $"\text{val}"$ computation is absolute, 
		$$ M_{U^2}[G\times \{1\}\times G^*]\vDash Y = \text{val}\left( j_U(\vec{A}), G\times\{1\}\times G^* \right). $$
		In order to deduce that $\vec{A}_Y = j_U(\vec{A})$ we need to prove $j_{U^2}(\vartriangleleft)$-minimality of $j_U(\vec{A})$. Assume that some $\vec{B}\in \left( j_{U^2}(\mathcal{A}) \right)^{\kappa_1}$ is  $j_{U^2}(\vartriangleleft)$-below $j_U(\vec{A})$, and
		$$ M_{U^2}[G\times \{1\}\times G^*]\vDash Y = \text{val}(\vec{B}, G\times \{1\}\times G^*). $$
		Since $j_{U^2}(\vartriangleleft) \uhr \left( H_{(\kappa_1)^+} \right)^{M_U} = j_U(\vartriangleleft)$ and $j_U(\vec{A})\in \left( H_{(\kappa_1)^+} \right)^{M_U}$, we deduce that $\vec{B}\in M_U$, $\vec{B}$ is $j_U(\vartriangleleft)$-below $j_U(\vec{A})$, and 
		$$ M_U[G\times \{1\}\times G^*] \vDash j_{U_1}(X) = \text{val}( \vec{B}, G\times \{1\}\times G^* ). $$
		However, this contradicts elementarity and the fact that, in $V[G]$, $\vec{A}$ is $\vartriangleleft$-least such that $X = \text{val}(\vec{A}, G)$.
	\end{proof}

	Next, we argue that $\vc, \wc$ are Ketonen incomparable. Since we already established that $\vc <_{L} \wc$, we only need to rule out the possibility that $\vc {<_k} \wc$. Assume otherwise. Let $U^*\in \text{Ult}(V[G], \wc)$ and $k\colon M_{\vc}\to M^{M_{\wc}}_{U^*}$ be an elementary embedding such that $k\circ j_{\vc} = j^{M_{\wc}}_{U^*}\circ j_{\wc}$ and $k([Id]_{\vc})< j_{U^*}\left(  [Id]_{\wc} \right)$, namely $k(\kappa) < j_{U^*}\left( \kappa_1+\kappa \right) $. We will derive a contradiction below by showing that 	$$\left(k\circ j_{\vc}\right)(G )\neq \left(j^{M_\wc}_{U^*}\circ j_{\wc}\right)( G ).$$ 
	
	Since we forced over $V = L[U]$, we can assume that the ultrapower embedding $(k\circ j_{\vc} )\uhr L[U] = (j^{M_{\wc}}_{U^*}\circ j_{\wc} )\uhr L[U]$ is a finite iterated ultrapower $j_{U^m}$ of $L[U]$ (the iteration is finite since $\po$ is $\sigma$-closed). Note that for every club $C\subseteq \kappa$ in $L[U]$, $ \kappa\in j_\vc(C) $, and so $ k(\kappa)\in k( j_\vc(C) ) = j_{U^m}(C) $. By Claim \ref{Claim: HayutPovedaIntersectionOfImagesOfClubs}, it follows that $k(\kappa) = j_{U^i}(\kappa)$ for some $i< m$. Since $k(\kappa)< j_{U^*} (\kappa_1+\kappa) = \kappa_1+\kappa$, we deduce $k(\kappa) \in \{ \kappa, \kappa_1 \}$. Note that we used here the fact that the only measurable cardinal in $\text{Ult}(V[G], \wc)$ is $j_{U^2}(\kappa)$, so $\text{crit}(j^{M_\wc}_{U^*})> \kappa_1+\kappa$.

	Consider the generic function $\cup G \colon \kappa\to 2$. On the one hand,
	$$ \left(\left(k\circ j_{\vc}\right)(\cup G)\right)(k(\kappa)) = k( j_{\vc}(G)(\kappa) ) = k\left( 0 \right) =0 . $$
	On the other hand, 	since $\text{crit}(j^{M_{\wc}}_{U^*}) > \kappa_1\geq k(\kappa)$, 
	$$ \left(\left(j^{M_{\wc}}_{U^*}\circ j_\wc\right)(\cup G)\right)(k(\kappa))  = j_{\wc}(\cup G)(k(\kappa)) = 1  $$
	where we used the fact that $j_\wc(G) = G\times\{1\}\times G^*\times\{1\}\times G^{**}$ and $k(\kappa)\in \{\kappa, \kappa_1\}$. This shows that 	$\left(k\circ j_{\vc}\right)(G )\neq \left(j^{M_\wc}_{U^*}\circ j_{\wc}\right)( G )$, which is the desired contradiction.
\end{proof}

	We would like to present some intuition behind the proof that $\vc <_L \wc$ in Theorem \ref{Theorem: Separating Lipschitz from Ketonen the Easy way}. Given a normal measure $U$ on a measurable cardinal $\kappa$, $U$ is Lipschitz below $W$, where $W \equiv_{RK} U^2$ is the measure derived from $j_{U^2}$ using $j_U(\kappa) + \kappa$ as a seed. This can be proved in several ways; perhaps the simplest is the observation that $U <_k W$, and therefore $U <_L W$. Our underlying intuition, however, comes from the following winning strategy for Player I in the game $G_{\kappa}(W,U)$.
	
	Assume that $i < \kappa$, and that Players I and II have so far constructed sequences $\langle a(j) : j < i \rangle$ and $\langle b(j) : j < i \rangle$, respectively. Player I asks whether $i = i_1 + i_0$ for some inaccessible cardinals $i_0 < i_1$. Player I plays $a(i) = 0$ (intuitively indicating that the sequence $\langle a(j) : j < i \rangle$ does not “represent’’ a set in $W \equiv_{RK} U^2$) if and only if $b(i_0) = 1$ (intuitively indicating that the sequence $\langle b(j) : j < i_1 \rangle$ does “represent’’ a set in $U$). It is routine to verify that this gives Player I a winning strategy, establishing $U <_L W$.
	
	However, $U$ and $W$ are also Ketonen comparable. To separate the two orders, we adapt this intuition to the Ketonen-incomparable measures $\vc$ and $\wc$ in $V[G]$. The key point is that in the generic extension, the players gain access to new strategies once they recognize that their universe is a forcing extension via $\po$.\footnote{Specifically, such strategies may involve the computation of nice names and their values with respect to various generic sets.}
	
	More specifically, assume that $\langle a(j) : j < i \rangle$ and $\langle b(j) : j < i \rangle$ are the moves of Players I and II, respectively, in the game $G_{\kappa}(\wc,\vc)$ up to some $i < \kappa$, where $i$ has the form $i = i_1 + i_0$ for $i_0 < i_1$ inaccessibles. Consider the set
	$$Y = \{j<i_1 \colon  b(j)=1 \}\subseteq i_1.$$
	Player I can present $Y$ as the interpretation of a canonical\footnote{Here, “canonical’’ means that the antichains assembling the nice name are chosen least  with respect to a fixed-in-advance well order $\vartriangleleft$ of $V_\kappa$, as in the proof of Theorem  \ref{Theorem: Separating Lipschitz from Ketonen the Easy way}.} nice name. Player I then computes how this nice name would be evaluated with respect to the generic $G'$ obtained from $G$ by flipping the generic bit at coordinate $i_0$ (from $0$ to $1$ or vice versa). Player I plays $a(i) = 0$ if and only if $i_0$ belongs to the set obtained in this computation.
	
	This describes a winning strategy for Player I in the game $G_{\kappa}(\wc,\vc)$, and indeed the set $Z$ from the proof of Theorem \ref{Theorem: Separating Lipschitz from Ketonen the Easy way} is based on this strategy.
	
	As a final remark, we note that the set $Z$ from the proof of Theorem \ref{Theorem: Separating Lipschitz from Ketonen the Easy way} must not be a $\sigma$-complete ultrafilter (by Proposition \ref{Proposition: equivalent characterization of Lipschitz and Ketonen}). We conjecture, even though it's not really clear to us, that $Z$ is not even a filter.
	
	Finally, we conclude this section by proving the claim used in the proof of Theorem \ref{Theorem: Separating Lipschitz from Ketonen the Easy way}.

\begin{proof}[Proof of Claim \ref{Claim: HayutPovedaIntersectionOfImagesOfClubs}]
	We argue that
	\begin{center}
		$\bigcap\{ j_{U^m}(C)\subseteq\kappa \colon C \mbox{ is a club in }\kappa \}= \{ j_{U^n}(\kappa) \colon n< m \}.$
	\end{center}
	
	The inclusion $\supseteq$ is simple, so we concentrate on the other inclusion. Denote $\kappa_n = j_{U^n}(\kappa)$ for every $n<m$. Let $\alpha$ be an ordinal such that for every club $C\subseteq \kappa$, $\alpha\in j_{U^m}(C)$. We argue that this implies that $\alpha\in \{ \kappa, \kappa_1, \ldots, \kappa_{m-1} \}$. Assume otherwise, and let $1\leq n\leq m-1$ be such that $\alpha\in (\kappa_{n-1}, \kappa_n)$, where $\kappa_0$ denotes $\kappa$. We can write $\alpha= j_{U^n}(h)\left( \kappa, \kappa_1, \ldots, \kappa_{n-1} \right)$ for some $h\colon [\kappa]^{n-1}\to \kappa$, and we can further assume that for every $\vec{\xi} = \la \xi_0,\ldots, \xi_{n-1} \ra\in [\kappa]^{n-1}$, $h(\vec{\xi}) > \xi_{n-1}$. For every club $C\subseteq \kappa$, $j_{U^m}(C)\cap \kappa_{n} = j_{U^n}(C)$. Therefore, for every such $C$, the fact that $\alpha\in j_{U^m}(C)$ implies that--
	$$\{ \vec{\xi}\in [\kappa]^{n-1} \colon h(\vec{\xi})\in C  \}\in U^n.$$
	In particular, $\mbox{Im}(h)$ is stationary in $V$. We derive a contradiction by constructing a regressive function $\varphi\colon \mbox{Im}(h)\to \kappa$ which is not constant on any stationary subset of $\kappa$. Define for every $x\in \text{Im}(h)$,
	$$ \varphi(x) = \min\{  \eta< \kappa \colon \mbox{for some }  \vec{\xi} = \la \xi_0,\ldots, \xi_{n-1} \ra\in [\kappa]^{n-1},\   h(\vec{\xi}) = x \mbox{ and } \xi_{n-1} = \eta \}. $$
	Note that for every $x\in \mbox{Im}(h)$, $\varphi(x)< x$. Also, for every $\eta< \kappa$, $\varphi^{-1}[\{ \eta \}] \subseteq h[\  [\eta+1]^{n-1}]$. Thus, $\varphi^{-1}[\{ \eta \}]$ is bounded in $\kappa$ and in particular is nonstationary.
\end{proof}

\section{Nonstationary support products}

Our next goal is to strengthen Theorem \ref{Theorem: Separating Lipschitz from Ketonen the Easy way} by additionally requiring that the Weak Ultrapower Axiom holds. By Corollary \ref{Corollary: UA implies that Ketonen is Lipschitz}, the full Ultrapower Axiom must fail in the resulting model. To achieve this, we rely on the recent construction of a model of Weak UA + $\neg$UA from \cite{kaplan2025number}. The main technique relevant for our purposes is nonstationary-support product forcing. A further technical feature we introduce is a substitute for “nice names’’ adapted to the setting of nonstationary-support products.

\begin{definition}
	\begin{enumerate}
		\item A set $A$ is called nonstationary in inaccessibles if for every inaccessible cardinal $\lambda$, $A\cap \lambda$ is nonstationary in $\lambda$. 
		\item Assume that $\la \qo_\alpha\colon \alpha<\kappa \ra$ are posets. The nonstationary support product $\prod^{NS}_{\alpha<\kappa} \qo_\alpha$ consists of conditions which are functions $p$ with domain $\alpha$, such that:
		\begin{enumerate}
			\item for every $\beta<\alpha$, $p(\beta)\in {\qo}_\beta$.
			\item the set $ \{  \beta<\alpha \colon p(\beta) \neq {0}_{{\qo}_\beta} \}$ is nonstationary in inaccessibles.
		\end{enumerate}
		
	\end{enumerate}
	
\end{definition}

The most central tool for analyzing nonstationary support products is the following fusion lemma, whose proof can be found in \cite[Lemma 1.3]{kaplan2025number}

\begin{lemma}(Fusion Lemma)\label{Lemma: Fusion for a product}
		Let $\kappa$ be a limit of inaccessible cardinals, and let $I\subseteq \kappa$ be an unbounded set of inaccessibles below $\kappa$. Let $\po = \prod_{\alpha\in I}^{NS}\qo_\alpha$ be a nonstationary support product. Assume that:
		\begin{enumerate}
			\item for every $\alpha\in I$, $\text{rank}({\qo}_\alpha)< \min(I\setminus \alpha+1)$.
			\item for every $\alpha\in I$, $\qo_\alpha$ is $\alpha$-closed. 
		\end{enumerate}
		Then $\po$ satisfies the \textbf{Fusion Property}; that is, given:
		\begin{itemize}
			\item a condition $p\in \po$,
			\item a sequence $\la d(\alpha) \colon \alpha<\kappa \ra$ of dense-open subsets of $\po$,
		\end{itemize}
		there exists $p^*\geq p$ and a club $C\subseteq \kappa$ (if $\cf(\kappa)= \omega$, $C$ is an unbounded cofinal $\omega$-sequence) such that, for every $\alpha\in C$, the set
		$$ \{ r\in \po\uhr {\alpha+1} \colon r^{\frown} p^*\setminus (\alpha+1)\in d(\alpha)  \} $$
		is dense in $\po\uhr {\alpha+1}$ above $p^*\uhr \alpha+1$.\footnote{We remark that $\po\uhr (\alpha+1)$ can be naturally identified with the poset $(\prod^{NS}_{\beta<\alpha} \qo_\beta)\times \qo_\alpha$. Also, each condition $p\in G$ can be identified with the pair $(p\uhr \alpha+1, p\setminus \alpha+1)\in (\po\uhr (\alpha+1))\times (\po\setminus (\alpha+1))$, where $p\setminus (\alpha+1)$ is defined to be $p\uhr (\kappa\setminus (\alpha+1))$.}
\end{lemma}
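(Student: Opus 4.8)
The plan is to run a fusion argument, building simultaneously a sequence $\langle p_\xi : \xi<\kappa\rangle$ with $p_0 = p$ and $p_\eta\ge p_\xi$ for $\xi\le\eta$ (the conditions get stronger), together with an increasing continuous cofinal sequence $\langle c_\xi:\xi<\kappa\rangle$ of limit points of $I$; the club will be $C=\{c_\xi:\xi<\kappa\}$, and $p^*$ will be the coordinatewise limit of the $p_\xi$ (every coordinate gets frozen at some stage). Before the recursion I would record two facts. (a) $\po\setminus(\alpha+1)$ is $<\mu_\alpha$-closed, where $\mu_\alpha:=\min(I\setminus(\alpha+1))$: each of its coordinates is some $\qo_\beta$ with $\beta\in I$, $\beta\ge\mu_\alpha$, hence $\beta$-closed and a fortiori $\mu_\alpha$-closed, and a union of fewer than $\mu_\alpha$ many sets nonstationary in inaccessibles is again such (for an inaccessible $\lambda$: its intersection with $\lambda$ is empty if $\lambda<\mu_\alpha$, and otherwise is a union of $<\mu_\alpha\le\lambda$ many sets nonstationary in $\lambda$, hence nonstationary by $\lambda$-completeness of $\mathrm{NS}_\lambda$). (b) For $\alpha$ a limit point of $I$, hypothesis (1) gives $\qo_\beta\in V_\alpha$ for all $\beta\in I\cap\alpha$; since $\alpha$ is a $\beth$-fixed point, $|V_\alpha|=\alpha$, and counting conditions together with their (nonstationary-in-inaccessibles) supports gives $|\po\uhr(\alpha+1)|\le 2^\alpha$, which under GCH is $<\mu_\alpha$ — so the poset $\po\uhr(\alpha+1)$ has fewer than $\mu_\alpha$ conditions.

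The key local step is the \emph{sealing} of a dense-open $d\subseteq\po$ at a limit point $\alpha$ of $I$: given $q\in\po\uhr(\alpha+1)$ and $t_0\in\po\setminus(\alpha+1)$, there is $t\ge t_0$ in $\po\setminus(\alpha+1)$ with $\{r\le q : r^{\frown}t\in d\}$ dense below $q$ in $\po\uhr(\alpha+1)$. Indeed, enumerate the conditions below $q$ in $\po\uhr(\alpha+1)$ as $\langle r_i:i<\theta\rangle$ with $\theta<\mu_\alpha$ by (b), and build an increasing $\langle t_i:i\le\theta\rangle$ in $\po\setminus(\alpha+1)$ where $t_{i+1}\ge t_i$ is chosen so that $r_i'^{\frown}t_{i+1}\in d$ for some $r_i'\le r_i$ (density of $d$), taking common extensions at limits, which exist since $\theta<\mu_\alpha$ and by (a). Then $t:=t_\theta$ works, since each $r_i'^{\frown}t\ge r_i'^{\frown}t_{i+1}\in d$ and $d$ is open. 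This is the one place where both hypotheses are used: (1) via the counting bound $\theta<\mu_\alpha$, and (2) via the closure of the tail poset.

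Now the recursion. At a successor stage $\xi+1$, given $p_\xi$, pick $c_{\xi+1}$ a limit point of $I$ above $\xi$ and $c_\xi$, and strengthen $p_\xi$ only on the coordinates in $(c_\xi,c_{\xi+1})$ so as to advance the sealing of $d(c_\eta)$ for every $\eta\le\xi$ up to the bound $c_{\xi+1}$ — deferring any part of a sealing that would require touching coordinates $\ge c_{\xi+1}$. (This bounded, block-by-block bookkeeping, iterating the sealing step, is exactly what the ``substitute for nice names'' provides.) At a limit stage $\lambda$ I would take the natural limit: below $c_\lambda$ every coordinate has been frozen, so $p_\lambda\uhr c_\lambda$ is determined and, below each inaccessible $\mu<c_\lambda$, its support agrees with that of an earlier $p_\xi$, hence is nonstationary (and the deferral discipline handles $\mu=c_\lambda$); on the coordinates above $c_\lambda$ the sequence $\langle p_\xi\setminus(c_\lambda+1):\xi<\lambda\rangle$ is increasing of length $\lambda\le c_\lambda<\mu_{c_\lambda}$ in the $<\mu_{c_\lambda}$-closed poset $\po\setminus(c_\lambda+1)$, so it has a common extension; and, crucially, the deferral discipline ensures that the remaining coordinates near $c_\lambda$ were touched only boundedly often below stage $\lambda$, so their coordinatewise limits exist — whence $p_\lambda$ is a genuine condition.

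Setting $p^*=\bigcup_\xi p_\xi$, the same support analysis shows $p^*$ is a condition, clearly $p^*\ge p$, and for each $c=c_\xi\in C$ the sealing of $d(c)$, completed across the later stages, yields that $\{r:r^{\frown}p^*\setminus(c+1)\in d(c)\}$ is dense below $p^*\uhr(c+1)$ — later strengthenings of the tail do not spoil this, since $d(c)$ is open. If $\cf(\kappa)=\omega$ one runs the same construction along a cofinal $\omega$-sequence in place of a club. I expect the main obstacle to be precisely the limit-stage analysis: certifying that the fusion condition $p^*$ has support nonstationary in inaccessibles. This constraint is vacuous away from Mahlo cardinals, and arranging that it holds at Mahlo stages is exactly what forces the deferred, block-by-block organisation of the recursion sketched above; verifying the arithmetic bound in (b) along a club is the other ingredient and is routine under GCH.
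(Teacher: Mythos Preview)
The paper does not actually prove this lemma; it cites \cite[Lemma 1.3]{kaplan2025number} for the proof. So there is no in-paper argument to compare against. That said, your sketch follows the standard fusion strategy for nonstationary-support products and has the right architecture: the tail-closure fact (a), the size bound (b), the sealing step, and the recursion with attention to supports at limit stages are exactly the ingredients one needs.

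A few remarks. First, your counting bound in (b) explicitly invokes GCH, which is not among the stated hypotheses of the lemma. In the paper's applications the ground model is $L[U]$, so this is harmless in context, but as a proof of the lemma \emph{as stated} it is an extra assumption you should flag. Second, the ``deferral discipline'' you describe---touching only $(c_\xi,c_{\xi+1})$ at stage $\xi{+}1$ while simultaneously advancing all pending sealings $d(c_\eta)$ for $\eta\le\xi$---is left vague, and it is not obvious how one advances the sealing of $d(c_\eta)$ when the portion of the tail in $(c_\eta,c_\xi]$ is already frozen. A cleaner route, which you may find easier to make fully precise, is to perform the \emph{complete} sealing of $d(c_\xi)$ at stage $\xi{+}1$ (your sealing lemma already hands you a single $t\in\po\setminus(c_\xi{+}1)$), and then choose $c_{\xi+1}$ inside the club $\bigcap_{\eta\le\xi+1}D_\eta$, where each $D_\eta\subseteq\kappa$ is a fixed club disjoint from $\supp(p_\eta)$. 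A diagonal argument then gives $c_\lambda\in\bigcap_{\eta<\lambda}D_\eta$ at every limit $\lambda$, so $p_\eta(c_\lambda)=0_{\qo_{c_\lambda}}$ for all $\eta<\lambda$ and the limit condition exists with $p_\lambda(c_\lambda)$ trivial; moreover $\{c_\xi:\xi<\lambda\}$ is a club in $c_\lambda$ disjoint from $\supp(p^*)$, which is precisely the nonstationarity you need at Mahlo stages. This sidesteps the bookkeeping entirely. Finally, your order convention drifts: the paper (and your opening sentence) uses $\ge$ for ``stronger'', but your sealing step writes ``$r\le q$'' and ``dense below $q$''; this is only cosmetic but worth cleaning up.
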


	Recall that our main goal is to mimic the proof of Theorem \ref{Theorem: Separating Lipschitz from Ketonen the Easy way} and produce two measures that are Lipschitz comparable but Ketonen incomparable. The main obstacle is the lack of a convenient analogue of “nice names” for subsets of $\kappa$ in the setting of nonstationary-support products. Nice names themselves are not suitable here, since they typically do not belong to $H_{\kappa^+}$ (as a consequence of the failure of the $\kappa^+$-c.c.\ in nonstationary-support products). Fortunately, a trick that effectively replaces nice names already appears in several works on nonstationary-support iterations. 
	
	We assume throughout that $\po$ is a forcing notion satisfying the assumptions of the Fusion Lemma \ref{Lemma: Fusion for a product}. The goal is to code each element $X\in (\mathcal{P}(\kappa))^{V[G]}$ by a sequence $\vec{\tau}\in ({V_\kappa}^\kappa)^V$, and develop an 'interpretation procedure' that retrieves $X$ in $V[G]$ from $\vec{\tau}$.

	The main idea is that $\dot{X}\cap \alpha\in V^{\po_{\alpha+1}}$, so we may find a $\po_{\alpha+1}$-name $\tau_\alpha\in V_\kappa$ for $\dot{X}\cap \alpha$. This was done above for each $\alpha$ separately, but the Fusion Lemma allows to find a single condition $p$, a club $C\subseteq \kappa$ and a sequence $\la \tau_\alpha \colon \alpha<\kappa \ra\in V$ such that $p\Vdash \dot{X}\cap \alpha = (\tau_\alpha)_{\dot{G}_{\alpha+1}}$ for every $\alpha\in C$ (see Lemma \ref{Lemma: every subset of kappa has a fusion code} below). By picking such $p$ inside $G$, the associated sequence $\vec{\tau} = \la \tau_\alpha \colon \alpha<\kappa  \ra$ can serve as a substitute for the name $\dot{X}$, by noting that
	$$ (\dot{X})_G = \bigcup_{\alpha\in C} \left( \tau_\alpha \right)_{G_{\alpha+1}}.$$
	The advantage is that $\vec{\tau}\in H_{\kappa^+}$, so $\vec{\tau}$ makes it into any inner model of $V$ that contains $H_{\kappa^+}$. Indeed, A typical application of this trick is the proof that whenever $j\colon V \to M$ is, say, an ultrapower embedding by a normal measure on $\kappa$, and $G \subseteq \po = \prod^{NS}_{\alpha \in I} \qo_\alpha$ is generic over $V$, then $(\mathcal{P}(\kappa))^{V[G]} = (\mathcal{P}(\kappa))^{M[G]}$, since every subset of $\kappa$ can be coded by a sequence $\la \tau_\alpha \colon \alpha<\kappa \ra\in H_{\kappa^+}$. 
	
	The argument above already appears in  \cite{FriMag09,ApterCummingsNormalMeasuresOnTallCards,benneriaunger2017homogeneouschangesincofinalities} and many other works involving the nonstationary support iterations or products.

	\begin{definition}
		Assume the settings of Lemma \ref{Lemma: Fusion for a product}. Fix a sequence $\vec{\tau} = \la \tau_\alpha \colon \alpha<\kappa\ra\in V$. Let $G\subseteq \po$ be generic over $V$ and  $X\subseteq \left(V_\kappa\right)^{V[G]}$. We say that $\vec{\tau}$ codes $X$ via $G$, and denote--
		$$ X = \mbox{val}(\vec{\tau}, G) $$ 
		if there exists a club $C\subseteq \kappa$ such that for every $\alpha\in C$, $\tau_\alpha$ is a $\po_{\alpha+1}$-name such that $$X\cap \left(V_\alpha\right)^{V[G]} = \left(\tau_\alpha\right)_{G_{\alpha+1}}.$$
	\end{definition}

	\begin{remark}
		Note that $\mbox{val}(\vec{\nu},  G)$ does not depend on the club $C$, in the sense that, if there are clubs $C,D\subseteq \kappa$ and $X,Y\subseteq \left( V_{\kappa} \right)^{V[G]}$ such that for every $\alpha\in C$,
		$$ X\cap \left( V_{\alpha} \right)^{V[G]} = \left(\tau_\alpha\right)_{G_{\alpha+1}} $$
		and for every $\xi \in D$,
		$$ Y\cap \left( V_{\alpha} \right)^{V[G]} = \left(\tau_\alpha\right)_{G_{ \alpha+1}} $$
		then $X = Y$.
	\end{remark}

	\begin{lemma}\label{Lemma: every subset of kappa has a fusion code}
		Assume the settings of Lemma \ref{Lemma: Fusion for a product}. Suppose that  $\name{X}$ is a $\po$-name for a subset of $\left(V_{\kappa}\right)^{V^{\po}}$. Then there exists $p\in G$, a club $C\subseteq \kappa$ (in $V$), and a sequence $\vec{\tau} = \la {\tau}_\alpha \colon \alpha\in C \ra\in V$, such that, for every $\alpha\in C$, ${\tau}_\alpha$ is a $\po_{\alpha+1}$-name, and
		$$ p\Vdash \name{X}\cap \left(V_{\alpha}\right)^{  V[\name{G}_{\alpha+1}] } = \left( \tau_{\alpha} \right)_{G_{\alpha+1}}. $$
		In particular, in $V[G]$, $\left(\dot{X}\right)_G = \text{val}( \vec{\tau}, G ).$
	\end{lemma}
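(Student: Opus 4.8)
The plan is to uniformize, over $\alpha$, the per-coordinate names promised by the discussion preceding the statement; the Fusion Lemma~\ref{Lemma: Fusion for a product} is exactly the tool for this. For each $\alpha<\kappa$ I would take $d(\alpha)$ to be the set of conditions $q\in\po$ for which there is a ground-model $\po_{\alpha+1}$-name $\sigma$ with $q\Vdash \name X\cap(V_\alpha)^{V[\name G_{\alpha+1}]}=\sigma_{\name G_{\alpha+1}}$; this is visibly open. To see it is dense, factor $\po=\po_{\alpha+1}\times(\po\setminus(\alpha+1))$ and note that $\po\setminus(\alpha+1)$ is $\gamma$-closed, where $\gamma$ is the least inaccessible $>\alpha$: each $\qo_\beta$ with $\beta>\alpha$ is $\beta$-closed, hence $\gamma$-closed, and a nonstationary-support product of $\gamma$-closed forcings indexed by inaccessibles in $(\alpha,\kappa)$ is again $\gamma$-closed. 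The rank hypothesis of Lemma~\ref{Lemma: Fusion for a product} gives $\po_{\alpha+1}\in V_\gamma$, so $|\po_{\alpha+1}|<\gamma$, and $|V_\alpha|<\gamma$ since $\gamma$ is a strong limit. Forcing the tail first and using $|\po_{\alpha+1}|<\gamma$, one checks in the standard way that $\po\setminus(\alpha+1)$ adds no new subset of $V_\alpha$ over $V[\name G_{\alpha+1}]$; equivalently, $\name X\cap V_\alpha$ is forced to lie in $V[\name G_{\alpha+1}]$, which is precisely the density of $d(\alpha)$. The same computation gives $(V_\alpha)^{V[\name G_{\alpha+1}]}=(V_\alpha)^{V[G]}$, reconciling the statement with the definition of $\mathrm{val}$.

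Given $p\in\po$, I would apply the Fusion Lemma to $p$ and $\la d(\alpha):\alpha<\kappa\ra$, obtaining $p^*\geq p$ and a club $C\subseteq\kappa$ (a cofinal $\omega$-sequence if $\cf(\kappa)=\omega$) such that for each $\alpha\in C$ the set $D_\alpha=\{r\in\po_{\alpha+1}: r\fr(p^*\setminus(\alpha+1))\in d(\alpha)\}$ is dense in $\po_{\alpha+1}$ above $p^*\uhr(\alpha+1)$. The key point is now to read the names off $p^*$ rather than off generic data: for $\alpha\in C$ I would let $\tau_\alpha$ be the ground-model set of all pairs $(\mu,r)$ such that $r\in\po_{\alpha+1}$ extends $p^*\uhr(\alpha+1)$, $\mu$ belongs to a fixed set of $\po_{\alpha+1}$-names rich enough to name every element of $(V_\alpha)^{V[\name G_{\alpha+1}]}$ and is forced by $r$ to denote such an element, and $r\fr(p^*\setminus(\alpha+1))\Vdash_\po\mu\in\name X$. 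Since the relevant names and conditions all have hereditary size $<\gamma$, each $\tau_\alpha$ is a set lying in $V_\kappa$, and $\vec\tau=\la\tau_\alpha:\alpha\in C\ra\in V$.

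To verify $p^*\Vdash \name X\cap(V_\alpha)^{V[\name G_{\alpha+1}]}=(\tau_\alpha)_{\name G_{\alpha+1}}$ for $\alpha\in C$, the inclusion $\supseteq$ is immediate, since $(\mu,r)\in\tau_\alpha$ with $r\in G_{\alpha+1}$ forces $r\fr(p^*\setminus(\alpha+1))\in G$, whence $\mu_{G}\in\name X_G\cap(V_\alpha)^{V[G_{\alpha+1}]}$. For $\subseteq$, fix generic $G\ni p^*$ and $x\in\name X_G\cap(V_\alpha)^{V[G_{\alpha+1}]}$. As $D_\alpha$ is dense above $p^*\uhr(\alpha+1)$ and $p^*\uhr(\alpha+1)\in G_{\alpha+1}$, there is $r\in D_\alpha\cap G_{\alpha+1}$; the condition $r\fr(p^*\setminus(\alpha+1))$ lies in $G$ and, being in $d(\alpha)$, forces $\name X\cap(V_\alpha)^{V[\name G_{\alpha+1}]}=\sigma_{\name G_{\alpha+1}}$ for some ground-model $\sigma$, so $x\in\sigma_{G_{\alpha+1}}$. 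Picking a name $\mu$ for $x$ from the fixed set and $r'\in G_{\alpha+1}$ extending $r$ that forces both $\mu\in\sigma$ and that $\mu$ denotes an element of $(V_\alpha)^{V[\name G_{\alpha+1}]}$, the condition $r'\fr(p^*\setminus(\alpha+1))$ extends $r\fr(p^*\setminus(\alpha+1))$ and forces $\mu\in\name X$; hence $(\mu,r')\in\tau_\alpha$ and $x\in(\tau_\alpha)_{G_{\alpha+1}}$.

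Finally, since this procedure produces from an arbitrary $p$ an extension $p^*$ witnessing the conclusion, the set of such conditions is dense in $\po$ and meets $G$; choosing $p\in G$ in it together with its associated $C$ and $\vec\tau$ gives the statement, and the ``in particular'' clause follows by unwinding $\mathrm{val}(\vec\tau,G)$ using $(V_\alpha)^{V[G_{\alpha+1}]}=(V_\alpha)^{V[G]}$ for $\alpha\in C$. I expect the only genuine obstacle to be the density of $d(\alpha)$ --- that $\name X\cap V_\alpha$ is already captured in $V[\name G_{\alpha+1}]$ --- which is where the rank hypothesis of the Fusion Lemma is used essentially (it makes $\po_{\alpha+1}$ small relative to the closure of the tail), and where one must be careful because no single condition of $\po$ decides $\name X\cap V_\alpha$ outright; everything past that is bookkeeping on top of the Fusion Lemma, the one trick being to define $\tau_\alpha$ directly from $p^*$ so that $\vec\tau$ ends up in $V$.
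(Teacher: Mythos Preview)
Your proposal is correct and follows essentially the same approach as the paper: define dense open sets $d(\alpha)$ capturing that $\name X\cap V_\alpha$ is already decided in $V[\name G_{\alpha+1}]$ (the paper phrases this via ``$q\uhr\alpha+1$ forces that $q\setminus\alpha+1$ decides $\name X\cap V_\alpha$'', which is equivalent), apply the Fusion Lemma, and read off $\po_{\alpha+1}$-names from the fused condition. The only cosmetic difference is that the paper builds $\tau_\alpha$ as a mix-name over a maximal antichain $B(\alpha)\subseteq D(\alpha)$, whereas you build $\tau_\alpha$ directly as a canonical name consisting of pairs $(\mu,r)$ with $r\fr(p^*\setminus(\alpha+1))\Vdash\mu\in\name X$; both are standard and yield the same object.
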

	
	\begin{proof}
		Define the sequence of dense open sets $\la d(\alpha) \colon \alpha<\kappa  \ra$, where, for each $\alpha<\kappa$,
		$$ d(\alpha) = \{ q\in \po \colon q\uhr \alpha+1 \Vdash \exists Y\subseteq  \left( V_\alpha \right)^{V[\name{G}_{\alpha+1}]},  q\setminus \alpha+1\Vdash \name{X}\cap \left(V_\alpha\right)^{V[\name{G}_{\alpha+1}]} = Y \}. $$
		Note that the density of $d(\alpha)$ follows since $\po\setminus \alpha+1$ is forced to be more than $\alpha$-closed. By applying the fusion lemma  \ref{Lemma: Fusion for a product}, we may find a condition $p\in G$ and a club $C\subseteq \kappa$, such that, for every $\alpha\in C$, the set 
		$$D(\alpha) =  \{  r\in \po_{\alpha+1}  \colon r\Vdash \exists Y\subseteq   \left( V_\alpha \right)^{V[\name{G}_{\alpha+1}]},  \  {p\setminus \alpha+1}\Vdash \name{X}\cap \left(V_\alpha\right)^{V[\name{G}_{\alpha+1}]} = Y  \} $$
		is dense open in $\po_{\alpha+1}$. 
		
		Fix a maximal antichain $B(\alpha)\subseteq D(\alpha)$ above $p\uhr \alpha+1$, and for each $r\in B(\alpha)$, let ${\tau}^r_\alpha$ be a $\po_{\alpha+1}$-name for the subset $Y$ as above. The $\po_{\alpha+1}$ mix-name for $\la {\tau}^r_\alpha \colon r\in B(\alpha)\ra$ is forced by $p_{\alpha+1}$ to be a subset of $\left( V_\alpha \right)^{V^{\po_{\alpha+1}}}$, where, by “mix-name”, we mean the name 
		$$\tau_\alpha =  \{ \la r, {\tau}^r_\alpha \ra  \colon r\in B(\xi) \}\cup \{ \la r, \check{0} \ra \colon r \text{ doesn't extend } p_{\alpha+1}  \}.$$ 
		Then for every $\alpha\in C$, $\tau_\alpha$ is a $\po_{\alpha+1}$-name, and 
		$$ p\Vdash \name{X}\cap \left( V_\alpha \right)^{V[\name{G}_{\alpha+1}]} = \left(  \tau_\alpha \right)_{ \name{G}_{\alpha+1} } $$
		as desired.
	\end{proof}
	
	\begin{corollary}
		Let $\kappa$ and $\po = \prod^{NS}_{\alpha\in I} \qo_\alpha$ be as in Lemma \ref{Lemma: Fusion for a product}. Assume that $M$ is an inner model of $V$ with $H_{\kappa^+}\subseteq M$. Let $\po^M = \left(\prod^{NS}_{\alpha\in I} \qo_\alpha\right)^M$. Then:
		\begin{enumerate}
			\item $\po = \po^M$.
			\item If $G\subseteq \po$ is generic over $V$, then $G\subseteq \po = \po^M$ is generic over $M$.
			\item $(\mathcal{P}(\kappa))^{V[G]} = (\mathcal{P}(\kappa))^{M[G]} $.
		\end{enumerate} 
	\end{corollary}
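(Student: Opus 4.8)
The plan is to derive all three parts from the general principle that nonstationary-support products, under the hypotheses of the Fusion Lemma, are entirely determined by ``small'' data -- specifically, by the sequence $\la \qo_\alpha \colon \alpha \in I \ra$ together with the notion ``$A$ is nonstationary in inaccessibles'', and the latter is absolute between $V$ and any inner model $M$ containing $H_{\kappa^+}$. First I would prove item (1). Since each $\qo_\alpha$ has rank below $\min(I \setminus (\alpha+1))$, we have $\qo_\alpha \in V_\kappa \subseteq H_{\kappa^+} \subseteq M$, and moreover the sequence $\la \qo_\alpha \colon \alpha \in I \ra$ itself lies in $H_{\kappa^+}$, so it belongs to $M$ and is the same object computed in $M$. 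A condition $p \in \po$ is a function with domain some $\alpha \le \kappa$ whose support $\{\beta < \alpha \colon p(\beta) \ne 0_{\qo_\beta}\}$ is nonstationary in inaccessibles; such a $p$ lies in $H_{\kappa^+}$, hence in $M$. The only thing to check is that ``support is nonstationary in inaccessibles'' is computed the same way in $V$ and $M$: for each inaccessible $\lambda \le \kappa$, the witnessing club $D \subseteq \lambda$ disjoint from the support lies in $V_\kappa \subseteq M$ (when $\lambda < \kappa$) or, for $\lambda = \kappa$, the relevant club is a subset of $\kappa$ and so lies in $H_{\kappa^+} \subseteq M$; conversely a club in $M$ witnessing nonstationarity is a club in $V$. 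Hence $\po^M = \po$, with the same order (inclusion).

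For item (2), I would use the coding machinery developed just above the corollary. Let $D \in M$ be a dense subset of $\po$; I must show $G \cap D \ne \emptyset$. The subtlety is that $D$ need not be in $V$, only in $M$. But $D$, being a subset of $\po \subseteq H_{\kappa^+}$, \emph{is} an element of $H_{\kappa^+}$ only if it has size $\le \kappa$, which a dense set need not have. Here is where I expect the main obstacle, and the fix is exactly the point of Lemma \ref{Lemma: every subset of kappa has a fusion code}: rather than arguing about dense sets directly, argue that $\po$ has, from the point of view of genericity, a dense subfamily of ``nice'' conditions. More carefully, the standard route is: it suffices to show every dense $D \in M$ below a given condition $p$ is met; by the Fusion Lemma applied in $V$ to the family $\la d(\alpha) \colon \alpha < \kappa\ra$ where $d(\alpha)$ asks that the condition decide enough of the generic to determine membership in $D$ -- wait, that still references $D$. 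The clean argument instead observes that it is enough to prove (3) first and then note that $M$-dense sets are captured by $V$-generic filters because $\po$ is $<\kappa$-closed in both models and any maximal antichain of $\po$ in $M$ has a refinement, via fusion, into an antichain coded by an element of $H_{\kappa^+}^M = H_{\kappa^+}^V$; since that coded antichain is in $V$ and is predense, $G$ meets it. I would spell this out by taking, for a name $\dot{A} \in M$ for ``the least element of $G \cap D$ if it exists'', the fusion code $\vec\tau \in H_{\kappa^+}^M \subseteq V$ given by Lemma \ref{Lemma: every subset of kappa has a fusion code} (applied in $M$, legitimately since $M \models$ the hypotheses of the Fusion Lemma by item (1) and absoluteness of the rank and closure conditions), and then running the density argument in $V$ using $\vec\tau$.

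For item (3), the inclusion $(\mathcal{P}(\kappa))^{M[G]} \subseteq (\mathcal{P}(\kappa))^{V[G]}$ is immediate since $M[G] \subseteq V[G]$. For the reverse, let $X \in (\mathcal{P}(\kappa))^{V[G]}$ with $\po$-name $\dot X \in V$; apply Lemma \ref{Lemma: every subset of kappa has a fusion code} in $V$ to get $p \in G$, a club $C \subseteq \kappa$, and a sequence $\vec\tau = \la \tau_\alpha \colon \alpha \in C\ra \in V$ with each $\tau_\alpha$ a $\po_{\alpha+1}$-name in $V_\kappa$ such that $p \Vdash \dot X \cap V_\alpha = (\tau_\alpha)_{\dot G_{\alpha+1}}$ for $\alpha \in C$. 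Now $\vec\tau$ is a subset of $V_\kappa \times \kappa$ of size $\kappa$, hence $\vec\tau \in H_{\kappa^+} \subseteq M$; likewise $C \subseteq \kappa$ so $C \in H_{\kappa^+} \subseteq M$, and $p \in \po = \po^M$ lies in $G$. Since $G$ is generic over $M$ by item (2) and the restrictions $G_{\alpha+1}$ are computed identically, inside $M[G]$ we may form $\bigcup_{\alpha \in C}(\tau_\alpha)_{G_{\alpha+1}}$, which by the displayed forcing statement equals $X$. Hence $X \in M[G]$, completing the proof.
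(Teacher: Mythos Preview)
Your treatment of items (1) and (3) matches the paper's proof essentially verbatim: for (1) you use that $V$ and $M$ agree on $H_{\kappa^+}$ (hence on ${}^\kappa V_\kappa$ and on clubs below $\kappa$), and for (3) you apply Lemma~\ref{Lemma: every subset of kappa has a fusion code} in $V$ to obtain a code $\vec\tau \in H_{\kappa^+} \subseteq M$, then reassemble $X$ inside $M[G]$.

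However, your handling of item (2) contains a genuine confusion. You write: ``The subtlety is that $D$ need not be in $V$, only in $M$.'' This is backwards. By hypothesis $M$ is an \emph{inner} model of $V$, so $M \subseteq V$; hence any $D \in M$ is automatically in $V$. Since $\po^M = \po$ by item (1), a set $D$ dense in $\po^M$ is literally the same set, dense in the same poset, in $V$; so $V$-genericity of $G$ gives $G \cap D \neq \emptyset$ immediately. That is the entire argument, and it is exactly what the paper does in one line: ``since $M$ is an inner model of $V$, any $G \subseteq \po$ which is generic over $V$ is also generic over $M$.'' All of your machinery --- applying fusion in $M$ to code a name for ``the least element of $G \cap D$'', pulling antichains through $H_{\kappa^+}$, etc. --- is unnecessary. (It would be relevant if the roles were reversed, e.g.\ showing that a filter generic over $M$ is generic over $V$, but that is not what is being asked.)
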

	
	\begin{proof}
		The facts that $V,M$ agree on ${}^\kappa\left( V_\kappa \right)$ and $\left( \mbox{Cub}_\kappa \right)^{V} = \left( \mbox{Cub}_\kappa \right)^M$, imply  that the nonstationary support product $\prod^{NS}_{\alpha<\kappa} \qo_\alpha$ is correctly computed in $M$. Next, since $M$ is an inner model of $V$, any $G\subseteq \po$ which is generic over $V$, is also generic over $M$. Thus, we concentrate on proving that $(\mathcal{P}(\kappa))^{V[G]} = (\mathcal{P}(\kappa))^{M[G]} $. Assume that $X\in V[G]$ is a subset of $\kappa$, and let $\dot{X}$ be a $\po$-name for it. Apply Lemma \ref{Lemma: every subset of kappa has a fusion code} to find a condition $p\in G$, a club $C\subseteq \kappa$ and a sequence $\la \tau_\alpha \colon \alpha\in C \ra\in {}^\kappa\left( V_\kappa \right)$ such that 
		$$ p\Vdash \dot{X}\cap \alpha = \left( \tau_\alpha \right)_{\dot{G}_{\alpha+1}}. $$
		Since $p, C, \vec{\tau}\in M$, 
		$$ X = \bigcup_{ \alpha\in C } \left( \tau_\alpha \right)_{G_{\alpha+1}} $$
		and the above computation can be done in $M[G]$.
	\end{proof}

	\section{Separating the orders under Weak UA}
	We proceed and define our main forcing $\po$. 
	Let $\kappa$ be a measurable cardinal, and denote 
	$$I = \{ \alpha<\kappa \colon \alpha \text{ is inaccessible} \}.$$
	Let $\la \po_\alpha, \dot{\qo}_\alpha \colon \alpha<\kappa \ra$ be a nonstationary support product $\prod^{NS}_{\alpha\in I} \qo_\alpha$ in which, for each inaccessible cardinal $\alpha<\kappa$, $\qo_\alpha = \{  0_{\qo_\alpha}, 0,1 \}$, where $0,1$ are incompatible elements (as above, we simply denote each forcing $\qo_\alpha$ by $\qo$). For every other value of $\alpha<\kappa$, $\qo_\alpha$ is the trivial forcing. Let $\po = \po_\kappa$. In other words,
	$$ \po = \{ f\colon X\to 2 \colon X\subseteq I \text{ is nonstationay in inaccessibles} \}, $$
	ordered by inclusion. 
	
	The following is one of the central results of \cite{kaplan2025number}.
	
	\begin{theorem}(\cite[Theorem 0.4]{kaplan2025number})\label{theorem: a nontrivial model of weak UA}
		Assume $V= L[U]$. Then $V[G]\vDash $Weak UA$+\neg$UA.
	\end{theorem}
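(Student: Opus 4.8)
The result is Theorem~0.4 of \cite{kaplan2025number}, and I would reproduce that argument; here is the shape it takes. Two things must be checked: $V[G]\vDash\neg$UA and $V[G]\vDash$ Weak UA. The failure of UA is the soft half. By Theorem~\ref{Theorem: Goldberg, UA is equivalent to the linearity of Ketonen} it is enough to find two Ketonen‑incomparable $\sigma$‑complete ultrafilters in $V[G]$, and by Corollary~\ref{Corollary: Ketonen and Lipschitz are Mitchell on normal measures} it suffices to produce two distinct normal measures on $\kappa$ in $V[G]$, neither of which lies in the ultrapower of the other. The plan is to run the standard lifting argument for the nonstationary‑support product $\po$ — using the Fusion Lemma~\ref{Lemma: Fusion for a product} together with the coding machinery of Section~4, which guarantees that every element of $\left(\mathcal{P}(\kappa)\right)^{V[G]}$ is captured by a sequence in $H_{\kappa^+}$ — to lift a fixed normal measure $U\in V=L[U]$ to normal measures $U_0,U_1\in V[G]$ whose ultrapowers have the form $\left(M_U\right)^V[H]$ for suitable $j_U(\po)$‑generic $H\supseteq G$; taking different values of the generic bit at coordinate $\kappa$ (and of the tail part of $H$) yields $U_0\neq U_1$. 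Since the unique measurable cardinal of $\left(M_U\right)^V[H]$ is $j_U(\kappa)>\kappa$, no normal measure on $\kappa$ can belong to either ultrapower, so $U_0,U_1$ are Mitchell‑incomparable and UA fails.

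The real content is Weak UA, and the engine is that $L[U]\vDash$ UA together with a classification of the $\sigma$‑complete ultrafilters of $V[G]$. First, since $\po\subseteq V_\kappa$ is $\sigma$‑closed one checks that the only measurable cardinal of $V[G]$ is $\kappa$, so every nonprincipal $\sigma$‑complete ultrafilter $Z\in V[G]$ has $\text{crit}(j_Z)=\kappa$. The key structural step is to show that $j_Z\uhr L[U]$ is a \emph{finite} iterated ultrapower $j_{U^{n}}\colon L[U]\to L[j_{U^{n}}(U)]$ — this rests on $\sigma$‑closedness of $\po$ and the thinness of $L[U]$ (morally: an iteration of infinite length would place, below the inaccessible $j_Z(\kappa)$ of $M_Z$, an image $\kappa_\omega$ of $\kappa$ of cofinality $\omega$ that the $\sigma$‑closed forcing $j_Z(\po)$ over $L[j_Z(U)]$ cannot disturb, which a bookkeeping argument shows is incompatible with the internal structure of $M_Z$) — and that $M_Z=L[j_{U^{n}}(U)][j_Z(G)]$, where $j_Z(G)$ is $j_{U^{n}}(\po)$‑generic over $L[j_{U^{n}}(U)]$ and extends $G$. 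Thus every $Z$ is coded by a triple: a natural number $n$, the generic $H_Z:=j_Z(G)$ for $j_{U^{n}}(\po)$ over the $n$‑th iterate, and a seed $[\mathrm{id}]_Z<j_{U^{n}}(\kappa)$, with $Z$ recovered as the ultrafilter derived from $j_{U^{n}}$ (extended by $H_Z$) at the seed.

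Given two $\sigma$‑complete ultrafilters $Z_0,Z_1$ of $V[G]$, the plan for Weak UA is: apply UA in $L[U]$ to the underlying finite iterates $j_{U^{n_0}},j_{U^{n_1}}$ to obtain, inside each iterate, a further finite iterated ultrapower so that the two compositions land in one common model $L[U^{*}]$; then lift this comparison through the generic extensions, choosing the needed generics over $L[U^{*}]$ \emph{coherently} — invoking the coding trick of Section~4 (a substitute for nice names) and the homogeneity of the forcings — so that $\text{Ult}\!\left(M_{Z_1},Z_0^{*}\right)=\text{Ult}\!\left(M_{Z_0},Z_1^{*}\right)$ as an equality of \emph{models}, which is all Weak UA demands. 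One does not expect $j_{Z_0^{*}}\circ j_{Z_1}=j_{Z_1^{*}}\circ j_{Z_0}$: the generic bits sitting at the iteration points are produced in different orders on the two sides, and this mismatch is exactly what makes the model satisfy Weak UA but not UA. I expect the main obstacle to be this last lifting step — matching the two top ultrapowers on their generic parts, not merely on their $L[U]$‑skeletons — for which the nonstationary‑support fusion apparatus of Section~4 is the essential tool; establishing the finiteness of $j_Z\uhr L[U]$ is the other point where genuine work is required.
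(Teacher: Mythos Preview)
The paper does not give a proof of this theorem at all: it is stated purely as a citation of \cite[Theorem~0.4]{kaplan2025number} and then used as a black box in the proof of Theorem~\ref{theorem: intro}. There is therefore nothing in the present paper to compare your sketch against.

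Your outline is a reasonable reconstruction of what such an argument must look like, and the $\neg$UA half is essentially what the paper already establishes in Claim~\ref{Claim: two generics after nonstat supp product} (two Mitchell-incomparable normal measures on $\kappa$). For the Weak UA half, your structural picture---every $\sigma$-complete ultrafilter $Z$ in $V[G]$ has $j_Z\uhr L[U]$ a finite iterate $j_{U^n}$, and $M_Z$ is a $j_{U^n}(\po)$-extension of the $n$-th iterate---is the expected shape, but it remains a sketch: the actual work in \cite{kaplan2025number} lies in making precise the ``coherent choice of generics'' step so that the two top ultrapowers literally coincide as models, and your paragraph acknowledges rather than carries out that step. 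If you intend this as more than a pointer to the cited proof, that lifting argument is where the genuine content is and would need to be written out.
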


	\begin{claim}\label{Claim: two generics after nonstat supp product}
		Let $\kappa$ be a measurable cardinal and $U$ a normal measure on $\kappa$. Let $\po$ be the above forcing and $G\subseteq \po$ generic over $V$. 
		\begin{enumerate}
			\item for every $i\in \{0,1\}$, let
			$$H_i = \{ q\in j_U(\po)\colon \exists p\in G \left( q\leq j_U(p)\cup \{ (\kappa,i)  \} \right) \}.$$
			Then each $H_i$ is $j_U(\po)$-generic over $M_U$, and $j_U[G]\subseteq H_i$.
			\item for every $i\in \{0,1\}$, there exists a normal measure $U_i\in V[G]$ on $\kappa$ such that $j^{V[G]}_{U_i}\colon V[G]\to M_U[H_i]$ is an elementary embedding that extends $j_U$ and maps $G$ to $H_i$.
		\end{enumerate}
	\end{claim}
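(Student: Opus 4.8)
The plan is to run a standard lifting argument for $j_U$, with the Fusion Lemma~\ref{Lemma: Fusion for a product} (and the coding machinery of Section~4) playing the role that ``nice names'' play in Easton-support liftings. First I would record the structure of $j_U(\po)$ in $M_U$: since $M_U$ and $V$ agree on ${}^{\kappa}(V_\kappa)$ and on the clubs of $\kappa$, one has $\po^{M_U}=\po^V=\po$, and $j_U(\po)$ factors in $M_U$ as $\po\times\qo\times\po_{\mathrm{tail}}$, where $\po_{\mathrm{tail}}:=j_U(\po)\setminus(\kappa+1)$ is the nonstationary-support product of copies of $\qo$ over the inaccessibles of $M_U$ in the interval $(\kappa,j_U(\kappa))$. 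Since the least such inaccessible $\mu_0$ lies far above $\kappa$ and $\mathrm{NS}_\lambda$ is $\lambda$-complete for regular $\lambda$, the forcing $\po_{\mathrm{tail}}$ is ${<}\mu_0$-closed in $M_U$, in particular $\kappa^+$-closed. I would also note that $\kappa\in j_U(I)$ while $\kappa\notin\dom(j_U(p))$ for every $p\in\po$ (as $\dom(p)$ is nonstationary, hence not in $U$), so that $j_U(p)\cup\{(\kappa,i)\}$ is a genuine condition of $j_U(\po)$, with ultrapower representation $[\alpha\mapsto p\cup\{(\alpha,i)\}]_U$.

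Next I would dispatch the easy assertions. The family $\{\,j_U(p)\cup\{(\kappa,i)\}:p\in G\,\}$ is directed — by directedness of $G$, order-preservation of $j_U$, and the fact that adjoining the fixed coordinate $(\kappa,i)$ preserves compatibility — and $H_i$ is precisely its downward closure; since $j_U(p)\le j_U(p)\cup\{(\kappa,i)\}$ we get $j_U[G]\subseteq H_i$. Restricting to coordinates below $\kappa$ gives $H_i\uhr\kappa=G$ (using $j_U(p)\uhr\kappa=p$), and the $\kappa$th coordinate of $H_i$ is the (trivially generic) bit $i$. The substantive point is that $H_i$ is $j_U(\po)$-generic over $M_U$. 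Since dense-open sets are closed under strengthening, it suffices to show that for every dense-open $D\in M_U$ of $j_U(\po)$ there is $p\in G$ with $j_U(p)\cup\{(\kappa,i)\}\in D$; by {\L}o\'s's theorem, writing $D=[\alpha\mapsto d(\alpha)]_U$ with each $d(\alpha)$ dense-open in $\po$, this amounts to $\{\alpha\in I:p\cup\{(\alpha,i)\}\in d(\alpha)\}\in U$. I would prove this by working in $V$ with the sequence $\langle d(\alpha):\alpha<\kappa\rangle$, building from it an appropriate sequence of dense-open subsets of $\po$, and applying the Fusion Lemma: a fusion condition $p^*$ meets the relevant requirement along a \emph{club} $C$ of coordinates, so the set of $p\in\po$ admitting such a club is dense; choosing such a $p$ inside $G$ and invoking normality of $U$ (clubs are $U$-large) then finishes. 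The delicate case — and the step I expect to be the main obstacle — is that of dense sets whose membership genuinely couples coordinates on both sides of $\kappa$: there one must first replace $D$ by an equivalent dense set for which membership of $p\cup\{(\alpha,i)\}$ depends only on the part of $p$ above $\alpha$ (using the high closure of $\po_{\mathrm{tail}}$ together with a reflection argument), so that the Fusion Lemma's conclusion pins down $p^*$ itself rather than merely a dense set below it. This reflection-plus-fusion manoeuvre is the substitute, in the nonstationary-support setting, for the ``enumerate all antichains of the tail forcing'' construction available with Easton support.

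Granting genericity of $H_i$, I expect part~(2) to be routine. Define $j^{V[G]}_{U_i}\colon V[G]\to M_U[H_i]$ by $j^{V[G]}_{U_i}(\dot x_G)=\bigl(j_U(\dot x)\bigr)_{H_i}$; this is well-defined and elementary because $H_i$ is $M_U$-generic and $j_U[G]\subseteq H_i$, it manifestly extends $j_U$, has critical point $\kappa$, and sends $G$ to $H_i$ (as $j_U(\dot G)$ is the canonical name for the $j_U(\po)$-generic), and one checks it is the ultrapower embedding by $U_i$ with $(M_{U_i})^{V[G]}=M_U[H_i]$. Finally, set $U_i=\{\,X\in(\mathcal P(\kappa))^{V[G]}:\kappa\in j^{V[G]}_{U_i}(X)\,\}$. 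Then $U_i\in V[G]$, because $j_U\in V$ and $H_i\in V[G]$, so the restriction of $j^{V[G]}_{U_i}$ to $(\mathcal P(\kappa))^{V[G]}$ is definable in $V[G]$; and $U_i$ is a normal measure on $\kappa$, being derived from an elementary embedding with critical point $\kappa$ using $\kappa=[\mathrm{id}]$ as a seed.
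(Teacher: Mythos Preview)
Your overall architecture matches the paper's: represent $D=[\alpha\mapsto d(\alpha)]_U$, apply the Fusion Lemma to $\langle d(\alpha):\alpha<\kappa\rangle$, translate by \L o\'s to level $\kappa$, and then lift. Part~(2) is exactly as you describe.

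Where you diverge is in the ``delicate case'' you flag. There is no such case, and no preliminary replacement of $D$ or reflection argument is needed. The Fusion Lemma already outputs a condition $p^*$ (which by density one may take in $G$) and a club $C$ such that for $\alpha\in C$ the set
\[
\{\,r\in\po\uhr(\alpha+1):\ r^{\frown}(p^*\setminus(\alpha+1))\in d(\alpha)\,\}
\]
is dense in $\po\uhr(\alpha+1)$ above $p^*\uhr(\alpha+1)$. Since $C\in U$, \L o\'s gives that
\[
\{\,q\in j_U(\po)\uhr(\kappa+1):\ q\cup\bigl(j_U(p^*)\setminus(\kappa+1)\bigr)\in D\,\}
\]
is dense in $j_U(\po)\uhr(\kappa+1)=\po\times\qo$ above $j_U(p^*)\uhr(\kappa+1)=p^*$. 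Now simply observe that $G\times\{i\}$ is $(\po\times\qo)$-generic over $M_U$ (trivially, since $\qo$ is a two-element antichain and $G$ is $\po$-generic over $V\supseteq M_U$). Meeting this dense set with $G\times\{i\}$ produces $p'\in G$ extending $p^*$ with $(p',i)\cup\bigl(j_U(p^*)\setminus(\kappa+1)\bigr)\in D$; as $D$ is open and $j_U(p')\cup\{(\kappa,i)\}$ extends this condition, we get $j_U(p')\cup\{(\kappa,i)\}\in D\cap H_i$. So you do \emph{not} need to arrange that the fusion condition itself lands in $d(\alpha)$; the extra genericity of $G\times\{i\}$ over $M_U$ absorbs the residual density, and this is what replaces the step you were trying to engineer by hand.
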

	
	\begin{proof}(Sketch; we refer the interested reader to \cite[Theorem 0.2]{kaplan2025number} for a more detailed proof)
		It's not hard to verify that each $H_i$ is a filter. For genericity, fix $D\in M_U$ a dense open subset of $j_U(\po)$. Let $\alpha\mapsto d(\alpha)$ be a function in $V$ such that each $d(\alpha)\subseteq \po$ is dense open, and $D = [\alpha\mapsto d(\alpha)]_U$. By the Fusion Lemma \ref{Lemma: Fusion for a product}, there exists a condition $p\in G$ and a club $C\subseteq \kappa$ such that for every $\alpha\in C$, 
		$$  \{  q\in \po_{\alpha+1} \colon q\cup (p\setminus \alpha+1)\in d(\alpha) \} $$
		is a dense subset of $\po\uhr{\alpha+1}$. In particular, since $C\in U$, 
		$$ \{ q\in j_U(\po)\uhr (\kappa+1)\colon   q\cup (j_U(p)\setminus \kappa+1)\in D \} $$
		is a dense subset of $j_U(\po)\uhr (\kappa+1) = \po\times \qo$. Since $G\times \{ i \}$ is generic for $\po\times \qo$ over $M_U$, by extending $p$ inside $G$, we deduce that $j_U(p)\cup \{ (\kappa,i) \}\in D\cap H_i$, as desired. 
		
		Finally, it's not hard to verify that $j_U[G]\subseteq H_i$ for each $i\in \{0,1\}$, and $j_U\colon V\to M_U$ lifts in two distinct ways to an embedding whose domain is $V[G]$, by mapping $G$ to $H_i$. In fact, each lifted embedding is the ultrapower embedding associated with the normal measures derived from it using $\kappa$ as a seed. Let $U_0, U_1$ be those normal measures.
	\end{proof}
	
	We remark that, assuming GCH, forcing with $\po$ preserves cardinals (see \cite[Corollary 1.6]{kaplan2025number}). 
	
	We are now prepared for the proof of Theorem \ref{theorem: intro}.
	
	\begin{proof}[Proof of Theorem \ref{theorem: intro}]
		Assume that $V = L[U]$, and denote by $\vartriangleleft$ the canonical well order of $H_{\kappa^+}$ in $L[U]$. Also denote $\kappa_1 = j_U(\kappa)$. Let $\po$ be the forcing nation defined at the beginning of this section, and let $G\subseteq \po$ be generic over $L[U]$. By Theorem \ref{theorem: a nontrivial model of weak UA}, $V[G]$ is a model of Weak UA and $\neg$UA. Thus, it remains to show that there are measures $\vc,\wc\in V[G]$ on $\kappa$ which are Lipschitz comparable and Ketonen incomparable.
		
		Let $\vc = U_0$ and $\wc = \{ X\subseteq \kappa \colon \kappa_1+\kappa\in j_{(U_1)^2}(X) \}$ (where $U_0, U_1\in V[G]$ are the normal measures on $\kappa$ from Claim \ref{Claim: two generics after nonstat supp product}). As in the proof of Theorem \ref{Theorem: Separating Lipschitz from Ketonen the Easy way}, we argue that $\vc <_L \wc$ but $\vc,\wc$ are Ketonen incomparable. The proof that $\vc, \wc$ are Ketonen incomparable is identical to the argument from Theorem \ref{Theorem: Separating Lipschitz from Ketonen the Easy way}, so we concentrate on proving that $\vc <_L \wc$.
		
		First, recall that $\wc$ is Rudin-Keisler equivalent to $(U_1)^2$, since the seed $(\kappa_1,\kappa)$ of $(U_1)^2$ can be extracted from the ordinal $\kappa_1+\kappa$ and vice-versa. In particular, $\text{Ult}(V[G], W) = \text{Ult}(V[G], (U_1)^2)$, and it has the form $M^* = M_{U^2}[G\times \{ 1\}\times G^*\times \{1\}\times G^{**}]$, where $G^*$ is generated by
		$$ \{  j_U(p)\setminus (\kappa+1) \colon p\in G \} $$
		and $G^{**}$ is generated by
		$$ \{ j_{U^2}(p)\setminus (\kappa_1+1) \colon p\in G \} .$$		
		
		Fix a subset $Y$ of $\kappa_1$ in $M^*$. Since $j_{U^2}(\po)\setminus (\kappa_1+1)$ is more than $\kappa_1$-closed, $Y\in M_{U^2}[G\times\{1\}\times G^*]$. 
		In $M_{U^2}[G\times\{1\}\times G^*]$, let $\vec{\tau}_Y$ be the $j_{U^2}(\vartriangleleft)$-least sequence in $(H_{(\kappa_1)^+})^{M_{U^2}}$ such that 
		$$M_{U^2}[G\times\{1\}\times G^*] \vDash Y = \text{val}\left( \vec{\tau}_Y, G\times\{1\}\times G^* \right).$$
		Note that such $\vec{\tau}_Y$ exists by Lemma \ref{Lemma: every subset of kappa has a fusion code}. 
		
		Define, in  $M_{U^2}[ G\times \{ 1 \}\times G^* ]$,  the set
		$$Z = \{ Y\subseteq \kappa_1 \colon \text{val}(\vec{\tau}_Y, G\times\{0\}\times G^*)  \text{ is defined and }\kappa\in  \text{val}(\vec{\tau}_Y, G\times\{0\}\times G^*) \}.$$
		The definition of $Z$ can be carried out in $M_{U^2}[G\times\{1\}\times G^*]$, since the assignment of $\vec{\tau}_Y$ for a set $Y$ can be done internally in $M_{U^2}[G\times \{1\}\times G^*]$. It's clear from the definition that $Z$ concentrates on $\kappa_1< [Id]_{\wc}$.\footnote{Note that $\kappa_1 = \text{val}\left( \la \check{\alpha} \colon \alpha<\kappa \ra, H \right)$ for any generic $H\subseteq j_U(\po)$ over $M_{U}$ (or over $M_{U^2}$).} 
		
		We argue that for every $X\subseteq \kappa$, 
		$$X\in \vc\iff j_\wc(X)\cap \kappa_1\in Z.$$
		Indeed, fix $X\subseteq \kappa$. Let $\vec{\tau}\in H_{\kappa^+}$ be the $\vartriangleleft$-least such that, in $V[G]$, $X = \text{val}(\vec{\tau}, G)$. By elementarity, it follows that both $\text{val}( j_U(\vec{\tau}), G\times \{ 0 \}\times G^* )$ and  $\text{val}( j_U(\vec{\tau}), G\times \{ 1 \}\times G^* )$ are defined; the former is equal to $j_{U_0}(X)$ and the latter to $j_{U_1}(X)$. Also, for every $i\in \{0,1\}$,
		$$\text{Ult}(V[G], U_i) \vDash j_U(\vec{\tau}) \text{ is } j_U(\vartriangleleft)\text{-minimal such that }j_{U_i}(X) = \text{val}( j_U(\vec{\tau}), G\times \{ i \}\times G^* ). $$
			
		
		Denote $Y = j_{\wc}(X)\cap \kappa_1$. We argue that, in the above notations, $ \vec{\tau}_{ Y } = j_U(\vec{\tau})$. In other words, 
		$$ M_{U^2}[G\times \{1\}\times G^*]\vDash j_{U}(\vec{\tau}) \text{ is }j_{U^2}(\vartriangleleft)\text{-least such that } Y = \text{val}\left( j_U(\vec{\tau}) ,G\times \{1\}\times G^* \right). $$
		
		First, the fact that 
		$$M_{U^2}[G\times \{1\}\times G^*]\vDash Y = \text{val}\left( j_U(\vec{\tau}) ,G\times \{1\}\times G^* \right)$$ 
		follows from the fact that, in $M_{U}[G\times \{1\}\times G^*]$,
		
		$$Y = j_{U_1}(X) = j_{U_1}\left( \text{val}( \vec{\tau}, G )  \right) = \text{val}(j_U(\vec{\tau}), G\times \{1\}\times G^* ). $$
		Note that it's important here that $\text{val}(j_U(\vec{\tau}), G\times \{1\}\times G^* )$ is computed the same way in $M_U[G\times \{1\}\times G^*]$ and $M_{U^2}[G\times \{1\}\times G^*]$. Finally, the $j_{U^2}(\vartriangleleft)$-minimality of $j_U(\vec{\tau})$ among the set of codes for $Y$ in $M_{U^2}[G\times\{1\}\times G^*]$ follows from the fact that $j_{U^2}(\vartriangleleft)$ coincides with $j_{U}(\vartriangleleft)$ on $\left( H_{(\kappa_1)^+} \right)^{M_U}$, $j_U(\vec{\tau}) \in \left( H_{(\kappa_1)^+} \right)^{M_U}$, and $j_U(\vec{\tau})$ is, by elementarity, the $j_U(\vartriangleleft)$-minimal code for $Y\cap \kappa_1 = j_{U_1}(X)$. 
		
		Overall, we deduce that indeed $\vec{\tau}_Y = j_U(\vec{\tau})$, so
		\begin{align*}
			X\in \vc \iff & \kappa\in j_{\vc}(X) = j_{U_0}(X) \\
					 \iff & \kappa \in \text{val}\left( j_U(\vec{\tau}),  G\times \{ 0 \}\times G^* \right) \\
					 \iff & \kappa\in \text{val}\left( \vec{\tau}_Y,  G\times \{ 0 \}\times G^* \right)  \\
					 \iff &j_\wc(X)\cap \kappa_1\in Z.
		\end{align*}
		By Proposition \ref{Proposition: equivalent characterization of Lipschitz and Ketonen}, this implies $\vc<_L \wc$.
	\end{proof}
	
	We conclude this paper with the following open problem, raised by Goldberg:
	
	\begin{question}[Goldberg]\label{Question: Linearity of Lipschitz implies UA?}
		Does UA follow from the assumption that the Lipschitz order is linear on the class of $\sigma$-complete ultrafilters?
	\end{question}
	
	The Mitchell order is not linear in the models constructed in the proofs Theorems \ref{Theorem: Separating Lipschitz from Ketonen the Easy way}, \ref{theorem: intro}. Thus, those models don't satisfy the linearity of the Lipschitz order, and cannot settle Question \ref{Question: Linearity of Lipschitz implies UA?}.  
	
	Question \ref{Question: Linearity of Lipschitz implies UA?} is motivated by the fact that linearity of the Ketonen order is equivalent to UA (see Goldberg's  Theorem~\ref{Theorem: Goldberg, UA is equivalent to the linearity of Ketonen} above). A positive answer would suggest that UA is a determinacy principle.

\bibliographystyle{plain}
\bibliography{Bibliography}

\end{document}